\newtheorem{thm}{Theorem}[section]
\newtheorem{rem}[thm]{Remark}
\title{ A Local Fourier Extension Method for Function  Approximation\thanks{The research is partially supported by National Natural Science Foundation of China (Nos. 12171455, RSF-NSFC 23-41-00002)}}
\author{Zhenyu Zhao\thanks{School of Mathematics and Statistics, Shandong University of Technology, Zibo, 255049, China,({\tt Zhenyu\_Zhao@sdut.edu.cn}).} \and Yanfei Wang \thanks{Key Laboratory of Deep Petroleum Intelligent Exploration and Development, Institute of Geology and Geophysics, Chinese Academy of Sciences, Beijing, 100029,China,({\tt yfwang@mail.iggcas.ac.cn}).}}
\begin{document}

\maketitle

\begin{abstract}
This paper proposes a novel localized Fourier extension method for approximating non-periodic functions via domain segmentation. By partitioning the computational domain into subregions with uniform discretization scales, the method achieves spectral accuracy at $\mathcal{O}(M)$ computational complexity. Theoretical error bounds and parameter dependency analyses validate the robustness of the proposed method. The relationship among the key parameters involved is analytically established, accompanied by an optimized parameter selection strategy. Numerical experiments further confirm the effectiveness of the proposed method.
\end{abstract}

\begin{keywords}
Fourier Extension, Fourier Continuation, Domain segmentation, Error estimate
\end{keywords}

\begin{AMS}
42A10, 65T40, 65T50
\end{AMS}

\pagestyle{myheadings}
\thispagestyle{plain}
\markboth{Z. Y. ZHAO and Y.F. Wang}{ Local Fourier Extension Method}
\section{Introduction}
Numerical approximation methods constitute a cornerstone of computational mathematics, playing a pivotal role in both theoretical advancements and engineering applications. Classical approximation theories not only provide essential tools for constructing numerical solutions to differential equations but also continuously drive technological innovations across multiple disciplines. For instance, the high-precision characteristics of spectral methods are fundamentally rooted in comprehensive studies of polynomial approximation and Fourier series expansion\cite{shen2011spectral}. Meanwhile, wavelet multiscale analysis, through its breakthrough in time-frequency localization properties, has established rigorous mathematical foundations for practical implementations such as image compression and signal  denoising \cite{cohen2011wavelets}.

In recent decades, frame approximation theory has emerged as a paradigm-shifting development in function representation, attracting significant attention due to its enhanced flexibility beyond conventional basis constraints \cite{adcock2019frames,herremans2024efficient,adcock2020frames}. This framework employs redundant representation systems while maintaining stability, thereby demonstrating superior adaptability to complex functional features. Of particular interest is the Fourier extension technique--a prototypical frame approximation approach that effectively mitigates Gibbs phenomena in classical Fourier approximations by extending target functions to larger periodic domains. This methodology has opened novel pathways for handling non-periodic functions while preserving spectral accuracy \cite{Huybrechs2010,bruno2010high,plonka2018numerical,Bruno2022,zhao2025new}.

The convergence properties of the Fourier extension method are comprehensively discussed in \cite{adcock2019frames, adcock2014numerical, Huybrechs2010, Webb2020}. Regarding numerical implementation,  remarkable progress has been made through the analysis of the discrete matrix's singular value distribution combined with randomized algorithms. Notably, \cite{Lyon2011} and \cite{Matthysen2016FAST} developed fast algorithms achieving $\mathcal{O}(M\log^2 M)$ complexity, where
$M$ represents the number of nodal points. Building upon these foundations, \cite{zhao2024fast} presents a more efficient approach that utilizes only boundary data for extension operations combined with FFT-based global approximation, attaining $\mathcal{O}(M\log M)$ complexity.

While existing methods predominantly adopt a global approximation perspective, they suffer from two fundamental limitations. First, localized singularities can severely degrade approximation accuracy across the entire computational domain. Second, the direct application of discretization techniques--such as collocation or Galerkin methods--typically results in fully populated discrete matrices. A key insight from \cite{zhao2024fast} is that increasing the extension length can reduce the number of required boundary nodes. In this work, we extend this idea to the interior of the domain through a domain decomposition strategy.

This paper proposes a partitioned extension methodology where the computational domain is subdivided into multiple subregions for localized extension operations. By employing uniform discretization scales across subdomains, we enable matrix reuse and reduce overall computational complexity to $O(M)$ while maintaining approximation quality. This structured approach not only enhances computational efficiency but also naturally lends itself to parallel implementation architectures.

\subsection{Overview of the paper}
The paper proceeds with the following organizational framework: Section \ref{SEC2} develops the methodological architecture, presenting both the theoretical framework and associated error analysis through rigorous mathematical proofs. Following this theoretical foundation, Section \ref{SEC3} investigates the computational implementation through  systematic parameter investigations and present practical guidelines for parameter selection derived from computational complexity analysis. The subsequent Section \ref{SEC4} validates the proposed methodology through comprehensive numerical experiments. The final section synthesizes key findings, discusses the method's broader implications for computational mathematics, and proposes directions for future research.
\section{The local Fourier extension method\label{SEC2}}
\subsection{ Outline of the method}
Let  $f\in C^{\infty}[a,b]$, points $a=a_0< a_1< \ldots <a_K=b$ divide $I=[a,b]$ into $K$ subintervals $I_1,I_2,\ldots, I_K$. Let  $f_k(x)$ be a function defined on interval $I_k$, satisfying:
\begin{equation}
  f_k(x)=f(x),\quad x\in I_k,\quad k=1,2,\ldots, K.
\end{equation}
Let $\Omega=[0,2\pi]$, $\Lambda=[0,\frac{2\pi}{T}]
\,\, (T>1)$.  $\{g_k(t)\}_{k=1}^K$ is a family of functions defined on $\Lambda$, such that
\begin{equation}
  g_k(t)=f_k(a_{k-1}+s_k t),\quad \forall t\in \Lambda,
\end{equation}
where
\begin{equation}
  s_k=\frac{T}{2\pi}(a_k-a_{k-1}).
\end{equation}
Let
\begin{equation}
\phi_{\ell}(t)=e^{i\ell t},\quad\ell=\pm1,\pm 2,\ldots,\quad t\in\Lambda.
\end{equation}
For any fixed $N$, we define
\begin{equation}
  \Phi_N=\{\phi_\ell\}_{|\ell|\leq N},
\end{equation}
It is  a frame for its span $\text{span}(\Phi_N)=:H_N$.
Define the operators
\begin{equation}
  \begin{aligned}
&{\mathcal{F}}_N:\mathbb{C}^{2N+1}\rightarrow H_N,& {\bf{c}}_N=\{ {c}_{\ell}\}_{|\ell|\leq N}\mapsto \sum_{\ell=-N}^{N}   {c}_{\ell}\phi_{\ell}.
  \end{aligned}
\end{equation}
Since ${\mathcal{F}}_N$ is of finite rank, it is a compact operator. Let $(\sigma_j,v_j,u_j)$ be a singular system of ${\mathcal{F}}_N$  and $\epsilon>0$ be a tolerance, then the  truncated singular value decomposition (TSVD) solution of equation
\begin{equation}\label{continuexteneq}
{\mathcal{F}}_N {\bf c}_{N}=g
\end{equation}
can be given as
\begin{equation}
  {\bf c}_{N}^{\epsilon}=\sum_{\sigma_j>\epsilon}\frac{\langle g, u_j\rangle}{\sigma_j}v_j.
\end{equation}
And we define the operator $\mathcal{Q}_{N}^{\epsilon}:L^2(\Lambda)\rightarrow \tilde{H}=:\text{ span}\{{u}_j:{\sigma}_j>\epsilon\}$  as
\begin{equation}
\mathcal{Q}_{N}^{\epsilon}g:={\mathcal{F}}_N{\bf c}_{N}^{\epsilon}=\sum_{\sigma_j>\epsilon}{\langle g, u_j\rangle}u_j.
\end{equation}
Thus, $\mathcal{Q}_{N}^{\epsilon}$ is the orthogonal projection from $L^2(\Lambda)$ to $\tilde{H}$.

Now we use
\begin{equation}
\left(\mathcal{P}_{N,K}^{\epsilon}f\right)(x):=\left(\mathcal{Q}_{N}^{\epsilon}g_k\right)\left(\frac{x-a_{k-1}}{s_k}\right),\quad x\in I_k, \quad k=1,2,\ldots, K
\end{equation}
as the local Fourier extension approximation of the function $f$.
\subsection{Error estimate of the method}
We can obtain the following error estimates:
\begin{lemma}The Fourier extension approximation $\mathcal{Q}_{N}^{\epsilon}g_k$ satisfies
  \begin{equation}\label{est1}
  \left\|g_k-\mathcal{Q}_{N}^{\epsilon}g_k\right\|_{L^2(\Lambda)}\leq \inf \{\|g_k-\mathcal{F}_N {\bf c}_N\|_{L^2(\Lambda)}+\epsilon\|{\bf c}_N\|_{\ell^2}:~{\bf c}_N\in\mathbb{C}^{2N+1}\}.
  \end{equation}
\end{lemma}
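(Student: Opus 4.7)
The plan is to reduce the inequality to a standard SVD calculation, using that $\mathcal{Q}_N^{\epsilon}$ is an orthogonal projection onto $\tilde{H}$.

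First, I would fix an arbitrary ${\bf c}_N \in \mathbb{C}^{2N+1}$ and split the error by inserting $\mathcal{F}_N {\bf c}_N$:
\begin{equation*}
g_k - \mathcal{Q}_N^{\epsilon} g_k
= (I - \mathcal{Q}_N^{\epsilon})(g_k - \mathcal{F}_N {\bf c}_N)
+ (I - \mathcal{Q}_N^{\epsilon}) \mathcal{F}_N {\bf c}_N.
\end{equation*}
For the first term, since $\mathcal{Q}_N^{\epsilon}$ is the orthogonal projection from $L^2(\Lambda)$ onto $\tilde H$, we have $\|I - \mathcal{Q}_N^{\epsilon}\|_{L^2 \to L^2} \le 1$, so its $L^2(\Lambda)$-norm is bounded by $\|g_k - \mathcal{F}_N {\bf c}_N\|_{L^2(\Lambda)}$.

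The key step is the second term. Using the singular system $(\sigma_j, v_j, u_j)$ of $\mathcal{F}_N$, I would expand ${\bf c}_N = \sum_{j} \langle {\bf c}_N, v_j\rangle v_j$ in the orthonormal basis $\{v_j\}$ of $\mathbb{C}^{2N+1}$, giving $\mathcal{F}_N {\bf c}_N = \sum_j \sigma_j \langle {\bf c}_N, v_j\rangle u_j$. Applying $I - \mathcal{Q}_N^{\epsilon}$ then kills exactly the indices with $\sigma_j > \epsilon$, leaving
\begin{equation*}
(I - \mathcal{Q}_N^{\epsilon}) \mathcal{F}_N {\bf c}_N = \sum_{\sigma_j \le \epsilon} \sigma_j \langle {\bf c}_N, v_j\rangle u_j.
\end{equation*}
Since $\{u_j\}$ is orthonormal in $L^2(\Lambda)$ and $\sigma_j \le \epsilon$ in this sum, Parseval's identity on $\mathbb{C}^{2N+1}$ yields
\begin{equation*}
\|(I - \mathcal{Q}_N^{\epsilon}) \mathcal{F}_N {\bf c}_N\|_{L^2(\Lambda)}^2
= \sum_{\sigma_j \le \epsilon} \sigma_j^2 |\langle {\bf c}_N, v_j\rangle|^2
\le \epsilon^2 \|{\bf c}_N\|_{\ell^2}^2.
\end{equation*}

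Combining the two bounds via the triangle inequality gives $\|g_k - \mathcal{Q}_N^{\epsilon} g_k\|_{L^2(\Lambda)} \le \|g_k - \mathcal{F}_N {\bf c}_N\|_{L^2(\Lambda)} + \epsilon\|{\bf c}_N\|_{\ell^2}$, and since ${\bf c}_N$ was arbitrary I would take the infimum on the right to finish. There is no real obstacle here; the only subtlety to be careful about is that $\mathcal{F}_N$ may have a nontrivial kernel (the frame is redundant), so I must verify that $\{v_j\}_{j=1}^{2N+1}$ is a full orthonormal basis of $\mathbb{C}^{2N+1}$ in the SVD convention, so that Parseval actually gives $\|{\bf c}_N\|_{\ell^2}^2$ on the right-hand side of the estimate above.
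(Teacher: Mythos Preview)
Your proof is correct and follows essentially the same route as the paper: the paper uses the best-approximation property of the orthogonal projection $\mathcal{Q}_N^\epsilon$ to write $\|g_k-\mathcal{Q}_N^\epsilon g_k\|\le\|g_k-\mathcal{Q}_N^\epsilon(\mathcal{F}_N{\bf c}_N)\|$ and then the triangle inequality, whereas you use the equivalent linear splitting via $(I-\mathcal{Q}_N^\epsilon)$ and $\|I-\mathcal{Q}_N^\epsilon\|\le 1$; the SVD estimate for $(I-\mathcal{Q}_N^\epsilon)\mathcal{F}_N{\bf c}_N$ is identical in both. Your caveat about the full orthonormal basis $\{v_j\}$ is well taken but harmless, since the SVD of a finite matrix always furnishes one.
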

\begin{proof}For any ${\bf c}_N\in \mathbb{C}^{2N+1}$, note that
 { $$\mathcal{Q}_{N}^{\epsilon}({\mathcal{F}}_N{\bf c}_N)\in \tilde{H}$$
 and $\mathcal{Q}_{N}^{\epsilon}g_k$ is the orthogonal projection of $g_k$ in $\tilde{H}$. So we can obtain
     \begin{equation*}
    \begin{aligned}
      \|g_k-\mathcal{Q}_{N}^{\epsilon}g_k\|_{L^2(\Lambda)}&\leq &&  \|g_k-\mathcal{Q}_{N}^{\epsilon}({\mathcal{F}}_N{\bf c}_N)\|_{L^2(\Lambda)} \\
      &\leq && \|g_k-\mathcal{F}_N{\bf c}_N\|_{L^2(\Lambda)}+\|\mathcal{F}_N{\bf c}_N-\mathcal{Q}_{N}^{\epsilon}({\mathcal{F}}_N{\bf c}_N)\|_{L^2(\Lambda)}.
    \end{aligned}
  \end{equation*}}
Moreover, since $\mathcal{F}_N{\bf c}_N\in H_N=R({\mathcal{F}}_N)$, we have
\begin{equation}
\begin{aligned}
\|\mathcal{F}_N{\bf c}_N-\mathcal{Q}_{N}^{\epsilon}{\mathcal{F}}_N{\bf c}_N\|_{L^2(\Lambda)}&=&&\left\|\sum_{\sigma_j\leq \epsilon}{\langle \mathcal{F}_N{\bf c}_N,u_j\rangle}u_j\right\|_{L^2(\Lambda)}\\
&=&&\left\|\sum_{\sigma_j\leq \epsilon}\sigma_j{\langle{\bf c}_N,v_j\rangle}u_j\right\|_{L^2(\Lambda)}\\
&\leq&&\epsilon\left\|{\bf c}_N\right\|_{\ell^2}
\end{aligned}
\end{equation}
This  completes the proof.
\end{proof}
\begin{theorem}
 The local Fourier extension approximation $\mathcal{P}_{N,K}^{\epsilon}f$ satisfies
 \begin{equation}\label{est2}
   \left\|f-\mathcal{P}_{N,K}^{\epsilon}f\right\|\leq \sqrt{\frac{T}{2\pi}(b-a)} \max_{1\leq k\leq K}\left\{\left\|g_k-\mathcal{Q}_{N}^{\epsilon}g_k\right\|_{L^2(\Lambda)}\right\}.
 \end{equation}
\end{theorem}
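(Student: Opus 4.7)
The plan is to interpret the norm on the left as $\|\cdot\|_{L^2(a,b)}$, split the integral over the partition $I_1,\ldots,I_K$, and on each subinterval apply the change of variables that converts $f$ on $I_k$ into $g_k$ on $\Lambda$. Since $\mathcal{P}_{N,K}^{\epsilon}f$ is defined piecewise exactly so that $(\mathcal{P}_{N,K}^{\epsilon}f)(x)=(\mathcal{Q}_N^{\epsilon}g_k)((x-a_{k-1})/s_k)$ on $I_k$, this change of variables will turn the local error into $\|g_k-\mathcal{Q}_N^{\epsilon}g_k\|_{L^2(\Lambda)}$ weighted by the Jacobian $s_k$.

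First I would write
\begin{equation*}
\|f-\mathcal{P}_{N,K}^{\epsilon}f\|^2 = \sum_{k=1}^{K}\int_{a_{k-1}}^{a_k}\bigl|f_k(x)-(\mathcal{Q}_N^{\epsilon}g_k)\bigl((x-a_{k-1})/s_k\bigr)\bigr|^2\,dx,
\end{equation*}
then substitute $t=(x-a_{k-1})/s_k$, so $dx=s_k\,dt$ and $t$ ranges over $[0,(a_k-a_{k-1})/s_k]=\Lambda$. Using the defining relation $g_k(t)=f_k(a_{k-1}+s_k t)$ the inner integral becomes $s_k\|g_k-\mathcal{Q}_N^{\epsilon}g_k\|_{L^2(\Lambda)}^2$.

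Next I would bound each term by the maximum over $k$ and collect the scaling factors: $\sum_{k=1}^{K}s_k = \frac{T}{2\pi}\sum_{k=1}^{K}(a_k-a_{k-1}) = \frac{T}{2\pi}(b-a)$, which yields
\begin{equation*}
\|f-\mathcal{P}_{N,K}^{\epsilon}f\|^2 \leq \frac{T}{2\pi}(b-a)\,\max_{1\leq k\leq K}\|g_k-\mathcal{Q}_N^{\epsilon}g_k\|_{L^2(\Lambda)}^2,
\end{equation*}
and taking square roots gives the stated estimate.

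There is no real obstacle here; the argument is essentially bookkeeping with the affine change of variables. The only point worth flagging is to make sure the norm on the left is specified as $L^2(a,b)$ (implicit from context) and that the telescoping identity $\sum_k(a_k-a_{k-1})=b-a$ is used to combine the subinterval scalings into the single factor $\frac{T}{2\pi}(b-a)$.
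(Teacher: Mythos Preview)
Your proposal is correct and follows essentially the same route as the paper: split $\|f-\mathcal{P}_{N,K}^{\epsilon}f\|_{L^2(I)}^2$ over the subintervals, apply the affine change of variables $t=(x-a_{k-1})/s_k$ to turn each piece into $s_k\|g_k-\mathcal{Q}_N^{\epsilon}g_k\|_{L^2(\Lambda)}^2$, then bound by the maximum and use $\sum_k s_k=\frac{T}{2\pi}(b-a)$. If anything, your write-up is slightly more explicit than the paper's in spelling out the final max-and-sum step and the square root.
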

\begin{proof}
 \begin{equation}
\begin{aligned}
  \|f-\mathcal{P}_{N,K}^{\epsilon}f\|^2_{L^2(I)}&=\int_{a}^b\left|f(x)-\left(\mathcal{P}_{N,K}^{\epsilon}f\right)(x)\right|^2dx\\
  &=\sum_{k=1}^K\int_{a_{k-1}}^{a_k}\left|f_k(x)-\left(\mathcal{Q}_{N}^{\epsilon}g_k\right)\left(\frac{x-a_{k-1}}{s_k}\right)\right|^2dx\\
  &=\sum_{k=1}^K\int_{\Lambda}s_k\left|g_k(t)-\left(\mathcal{Q}_{N}^{\epsilon}g_k\right)(t)\right|^2dt\\
  &\leq\frac{T}{2\pi} \sum_{k=1}^K(a_k-a_{k-1})\left\|g_k^c-g_k\right\|_{L^2(\Lambda)}^2.
\end{aligned}
 \end{equation}
\end{proof}

\begin{rem}
  \begin{enumerate}
  \item We can see that for the function $f(x)=e(\text{i}\pi\omega x)$, its corresponding
  \begin{equation}\label{omegachange}
 g_k(t)=e^{\text{i}\omega\pi a_{k-1}}e^{\text{i}\omega\pi s_k t}.
  \end{equation}
  That is, the frequency of the function changes from $\omega\pi$ to $s_k\omega\pi$. Therefore, for high-frequency functions, as long as the interval division can ensure that the selected $a_k-a_{k-1}$ is small enough, it can play a role in reducing the frequency in the extension calculation, so that only a small $N$ is needed to achieve high-precision approximation.
  \item Combining the results of \eqref{est1} and \eqref{est2}, we can see that whether the method can obtain a good approximation result depends mainly on whether the extended approximation on each subinterval can achieve the desired accuracy. According to the  approximation theory, for low-frequency functions $g(t)$,  there always exists an appropriate value of $N$ that enables the vector ${\bf c}_N$ to be found so that $\|g-\mathcal{F}_N{\bf c}_N\|_{L^2(\Lambda)}=\mathcal{O}(\epsilon)$ and $\|{\bf c}_N\|_{\ell^2}$ is not too large.
\end{enumerate}

\end{rem}

\section{Numerical implementation and parameter determination\label{SEC3}}In specific problems, the lengths of the subintervals $I_k$ are not necessarily the same. When the frequency of the function {\color{red} $f$} varies greatly over the entire interval $[a, b]$, it is a better choice to set the interval length according to the frequency. This can be achieved by testing the approximation error a posteriori. In order to ensure the consistency of the discrete matrix in the subsequent extension calculation process, we take an equal number of  equidistant nodes in each subinterval. That is, we take $K$ groups of nodes $\{x_{ki}\}_{i=1}^m$, $k=1,2,\ldots,K$:
\begin{equation}
  x_{ki}=a_{k-1}+i h_k,\quad h_k=\frac{a_{k}-a_{k-1}}{m-1}, \quad i=0,1,\ldots m-1.
\end{equation}
\subsection{Discretization of equation \eqref{continuexteneq}}
Take the sampling ratio $\gamma\geq1$ and let
\begin{equation*}
m=\lceil\gamma(2N+1)\rceil,
\end{equation*}
  where $\lceil\cdot\rceil$ is the rounding symbol.
 After selecting the parameters $N,\gamma, T$ and let
 \begin{equation}
  L=\lceil T\times m \rceil,\quad h=\frac{2\pi}{L},\quad t_i=i h, \quad i=0,1,\ldots,m-1.
  \end{equation}
  Then we get the discrete form of equation \eqref{continuexteneq}:
\begin{equation}\label{discreteeq}
{\bf F}_{\gamma,N}^{T}  {\bf c}_{N,k} ={\bf g}_k,
  \end{equation}
  where
  \begin{equation}
     \left({\bf F}_{\gamma,N}^{T}\right)_{i,\ell}=\frac{1}{\sqrt{L}}\phi_{\ell}(t_{i}),\quad \left({\bf g}_k\right)_{i}=\frac{1}{\sqrt{m}}g_k(t_{i})=\frac{1}{\sqrt{m}}f_k(x_{ki}).
  \end{equation}
  The SVD of ${\bf F}_{\gamma,N}^{T,p}$ can be given as
\begin{equation}\label{svdF}
 {\bf F}_{\gamma,N}^{T}=\sum_{j=1}^{2N+1}{\bf u}_j {\bm \sigma}_j{\bf v}^T_j,
\end{equation}
where ${\bm \sigma}_j$ are the singular values of ${\bf F}_{\gamma,N}^{T}$ such that
\begin{equation*}
  {\bm\sigma}_1\geq \ldots\geq  {\bm\sigma}_{2N+1}\geq 0,
\end{equation*}
while ${\bf u}_i$ and ${\bf v}_i$ are the left and right singular vectors of ${\bf F}_{\gamma,N}^{T}$, respectively.

Now  the TSVD solution of \eqref{discreteeq} can be given as
\begin{equation}\label{GTSVDsoldiscrete}
{\bf c}_{N,k}^{\epsilon}=\sum_{{\bm\sigma}_j>\epsilon}\frac{\langle {\bf g}_k, {\bf u}_j \rangle}{{\bm \sigma}_j}{{\bf v}}_j
\end{equation}
and we obtain the discrete approximation of  ${\mathcal{Q}}_{N}^{\epsilon}g_k$:
\begin{equation}\label{coefandQ1}
 {\bf Q}_{N}^{\epsilon}{\bf g}_k={\bf F}_{\gamma,N}^{T}{\bf c}_{N,k}^{\epsilon}.
\end{equation}
\subsection{Numerical testing  of key parameters}
Through  simple analysis, we can preliminarily see the impact of various parameters on the performance of the algorithm: Parameter $m$ governs the computational complexity during sub-region extension, while parameter $K$ controls the frequency characteristics of function $g_k(t)$, thereby determining the magnitude of $N$. The product $T\gamma$ dictates the discrete step size over interval $[0, 2\pi]$, which critically influences algorithm convergence. Furthermore, the  product $Km$ collectively determines the total node count across the operational domain $[a,b]$. Next, we will systematically examine these parameter relationships through targeted numerical experiments.

We take $a=-1, b=1$ and investigate the configuration with uniformly distributed partition points $a_k$. Specifically,  let $M=K(m-1), h=\frac{2}{M},$ and the nodal coordinates are constructed as:
$$ x_{ki}=-1+(k-1)(m-1)h+ih,\quad k=1,2,\ldots,K,\quad i=0,1,\ldots,m-1.$$
For numerical testing, we adopt the oscillatory test function:
$$f(x)=e^{\text{i}\pi\omega x}.$$
We first quantify the interplay between parameter $T$ and other parameters. Throughout our experiments, approximation errors are computed as the maximum pointwise error across a refined evaluation grid ($10\times$ denser than the construction grid), with the regularization parameter fixed at $\epsilon = 1e -14$.
\begin{figure}
			\begin{center}
\subfigure[\label{4a}$\gamma=2$, $K=20$, $N=20$] {
\resizebox*{5.5cm}{!}{\includegraphics{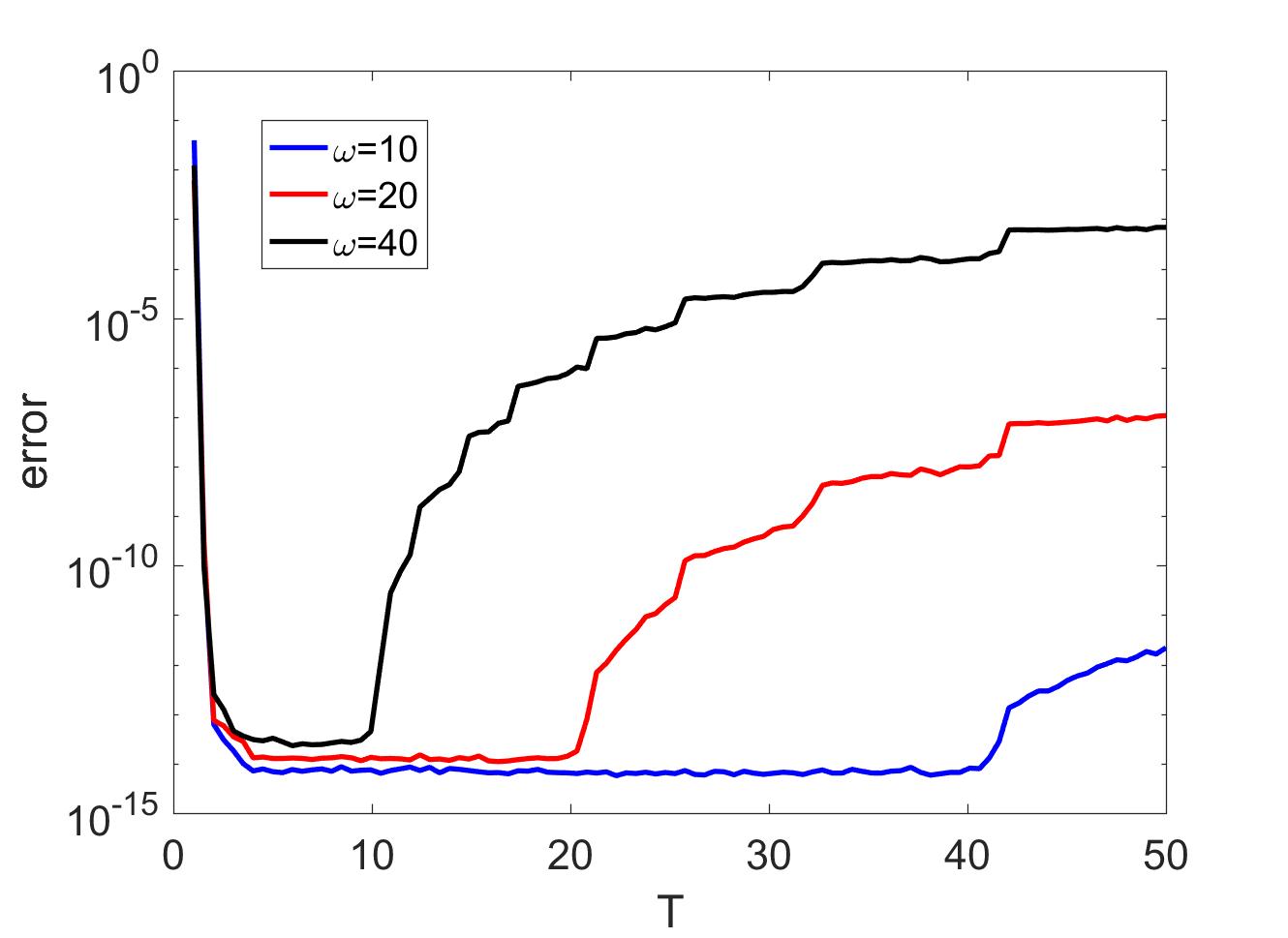}}
}%
\subfigure[\label{4b}$\omega=20$, $K=20$,  $N=20$] {
\resizebox*{5.5cm}{!}{\includegraphics{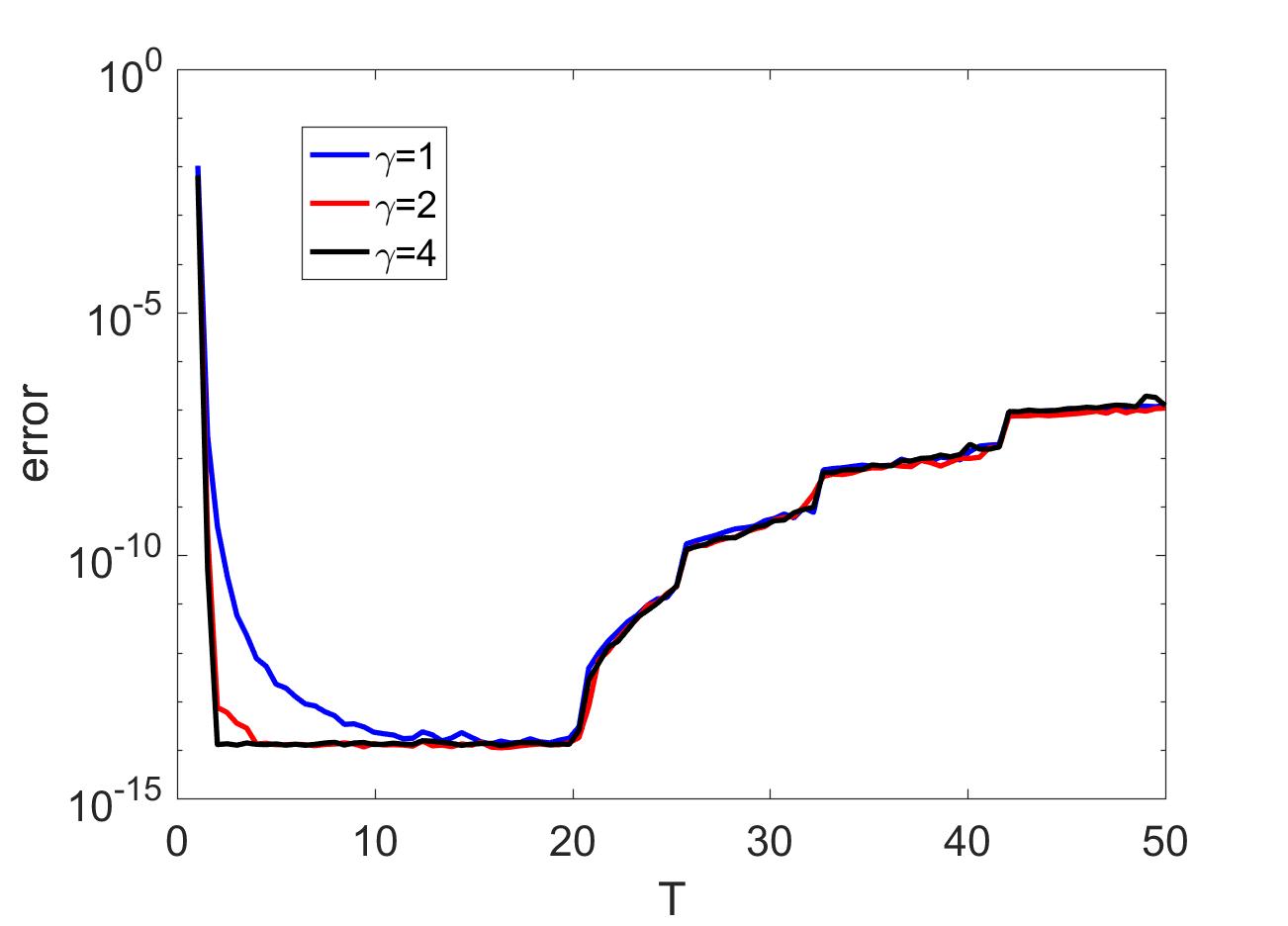}}
}%

 \subfigure[\label{4c}$\omega=20$, $\gamma=2$, $N=20$] {
\resizebox*{5.5cm}{!}{\includegraphics{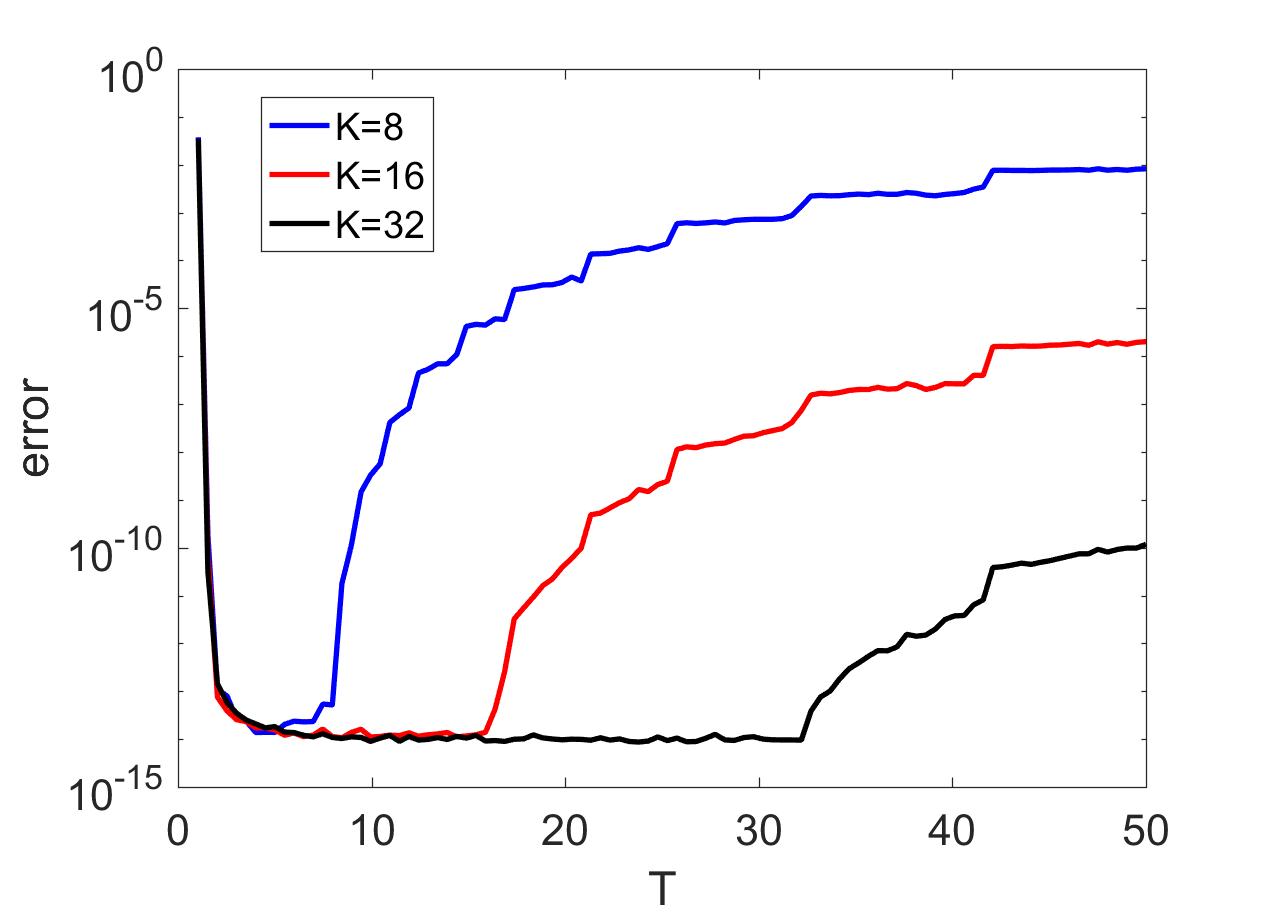}}
}%
 \subfigure[\label{4d}$\omega=40$, $\gamma=2$,$K=20$] {
\resizebox*{5.5cm}{!}{\includegraphics{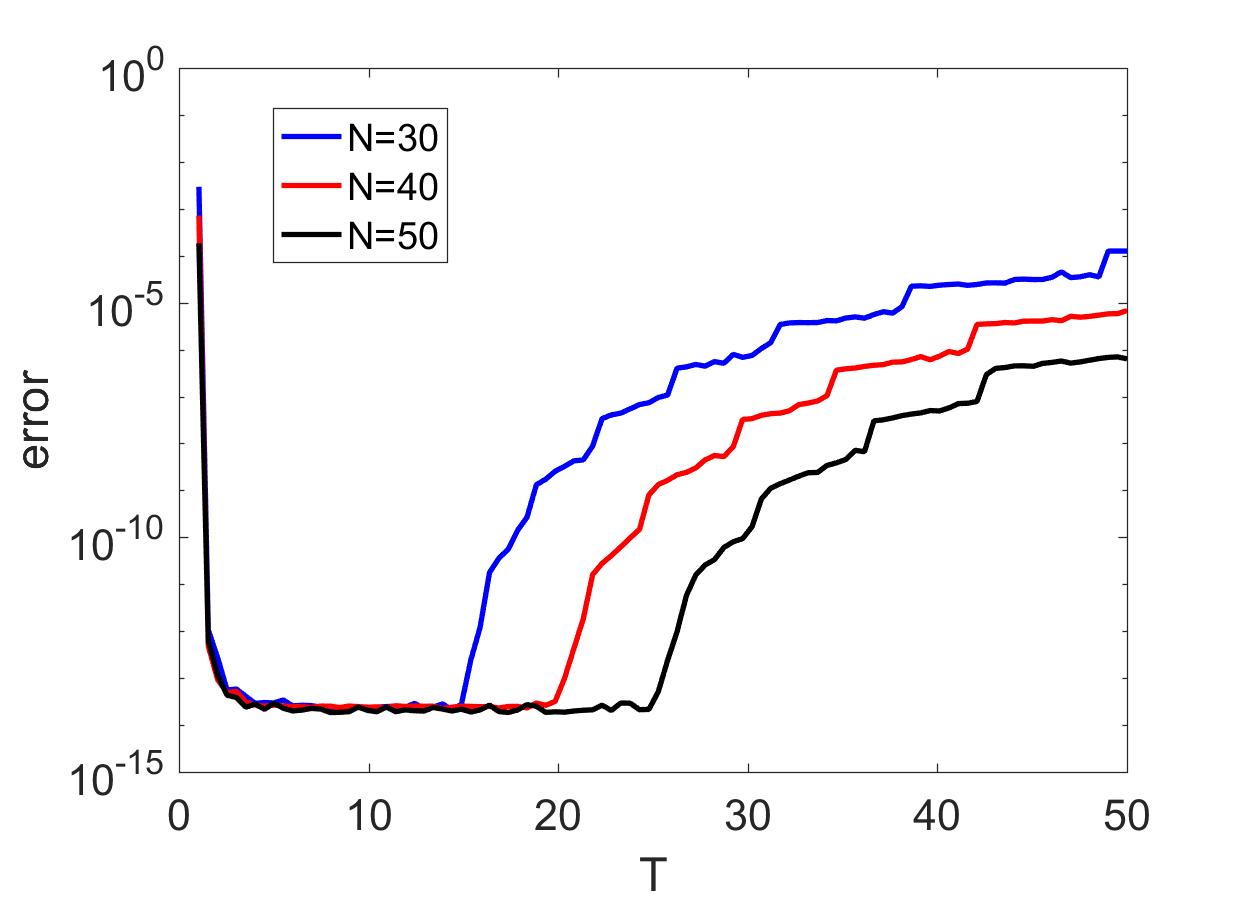}}
}%
		{\caption{The approximation error  against $T$ for different values of $\omega$,  $\gamma$, $N$ and $K$. \label{figTchange}}}
	\end{center}
\end{figure}

As demonstrated in  Fig. \ref{figTchange}, the proposed method achieves machine precision exclusively when the parameter
$T$  resides within a specific interval $[T_1,T_2]$. The lower bound $T_1$
 is solely determined by the parameter $\gamma$, whereas the upper bound $T_2$ exhibits dependence on multiple parameters:
$\omega$, $N$, and $K$. This behavior originates from the fundamental relationship $T\gamma$, which governs the discrete step size over the interval $[0,2\pi]$, thereby affecting the approximation of the discrete matrix $ {\bf F}_{\gamma,N}^{T}$ to the operator ${\mathcal{F}}_N$, and consequently dictates the convergence characteristics within each subinterval.
Fig. \ref{4b} clearly illustrates the inverse proportionality between $T_1$  and $\gamma$, showing decreased
$T_1$  values with increasing $\gamma$. Furthermore, as established in equation \eqref{omegachange}, effective approximation of
$g_k(t)$  across subintervals requires satisfaction of the condition:
\begin{equation}\label{relationN}
 N \geq \frac{\omega T}{K}.
\end{equation}
This relationship yields the upper bound estimate:
\begin{equation}
T_2 \leq \frac{KN}{\omega},
\end{equation}
which aligns consistently with the numerical results presented in the graphs. These results also reflect the possibility of our expectation to calculate with a fixed $N$ value. Specifically, for high-frequency functions, the validity of Equation \eqref{relationN} under constant $N$ can be preserved through strategic increases in parameter $K$. Equation \eqref{relationN} also reveals a critical relationship: expansion of $T$ proportionally necessitates larger $K$ values, corresponding to finer interval partitioning. Therefore, we are more concerned about the value of $T_1$ corresponding to different $\gamma.$ We tested this and the results are shown in Table \ref{tab1}.
  \begin{table}
\begin{center}
  \caption{The approximation value of ${T}_{1}$  for various $\gamma$. \label{tab1} }
  \small
{\begin{tabular*}{\textwidth}{@{\extracolsep\fill}cccccccccccccc} \toprule
&$\gamma=1$&$\gamma=1.2$&$\gamma=1.5$&$\gamma=2$&$\gamma=3$&$\gamma=4$\\\midrule
$T_1$&$5.6$&$4.5$&$3.9$&$2.3$&$1.6$&$ 1.2$ \\
\bottomrule
  \end{tabular*}}
\end{center}
\end{table}

We now establish the minimum parameter magnitude $N$ required to attain spectral accuracy in general low-frequency function approximations. We conduct numerical experiments with three representative frequencies: $\omega\in\{\sqrt{2},e^2,\sqrt{2\pi}\}$.

 Fig. \ref{figNchange} reveals two fundamental convergence characteristics:
\begin{enumerate}
  \item Exponential Error Decay: The approximation error diminishes super-algebraically with increasing $N$, achieving spectral accuracy upon reaching a critical threshold $N_{0}$.
  \item Parameter Dependence: The convergence threshold $N_{0}$  exhibits:
\begin{itemize}
  \item Exclusive dependence on $T$.  As $T$ increases, the value of $N_0$ decreases.
  \item Immunity to $K$ variations, despite $K$'s role in frequency modulation.
  \end{itemize}
 \end{enumerate}
 \begin{table}
\begin{center}
  \caption{The approximation value of ${N}_{0}$  for various $T$. \label{tab2} }
  \small
{\begin{tabular*}{\textwidth}{@{\extracolsep\fill}cccccccccccccc} \toprule
&$T=1.1$&$T=1.5$&$T=2$&$T=4$&$T=6$&$T=10$&$T=15$\\\midrule
$N_0$&$78$&$29$&$18$&$ 10$ &$9$&$7$&$5$\\
\bottomrule
  \end{tabular*}}
\end{center}
\end{table}

\begin{figure}
			\begin{center}
\subfigure[\label{5a}$\gamma=2$, $T=4$, $K=20$] {
\resizebox*{5.5cm}{!}{\includegraphics{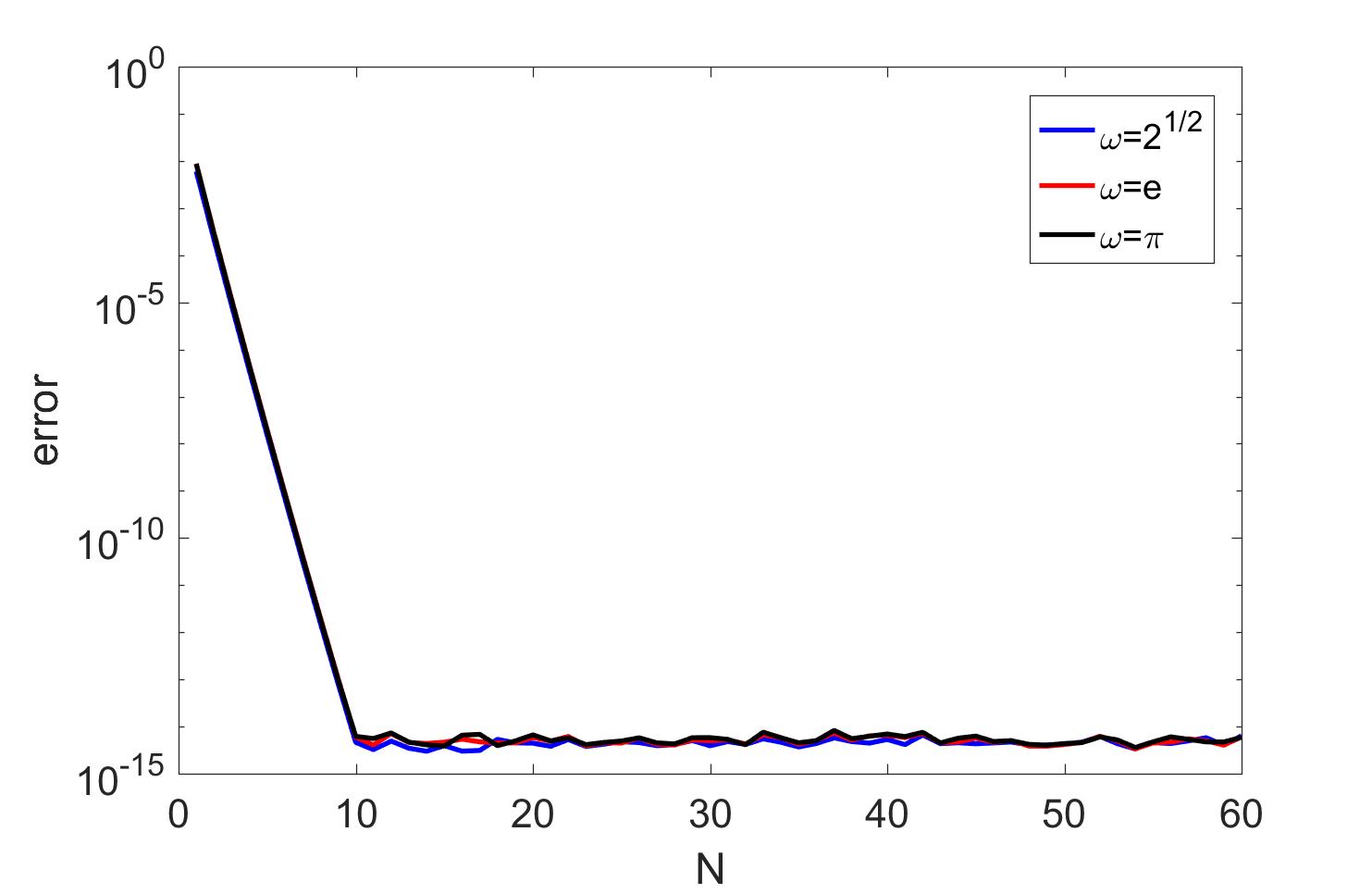}}
}%
\subfigure[\label{5b}$\gamma=4$, $K=20$, $\omega=\sqrt{2}$] {
\resizebox*{5.5cm}{!}{\includegraphics{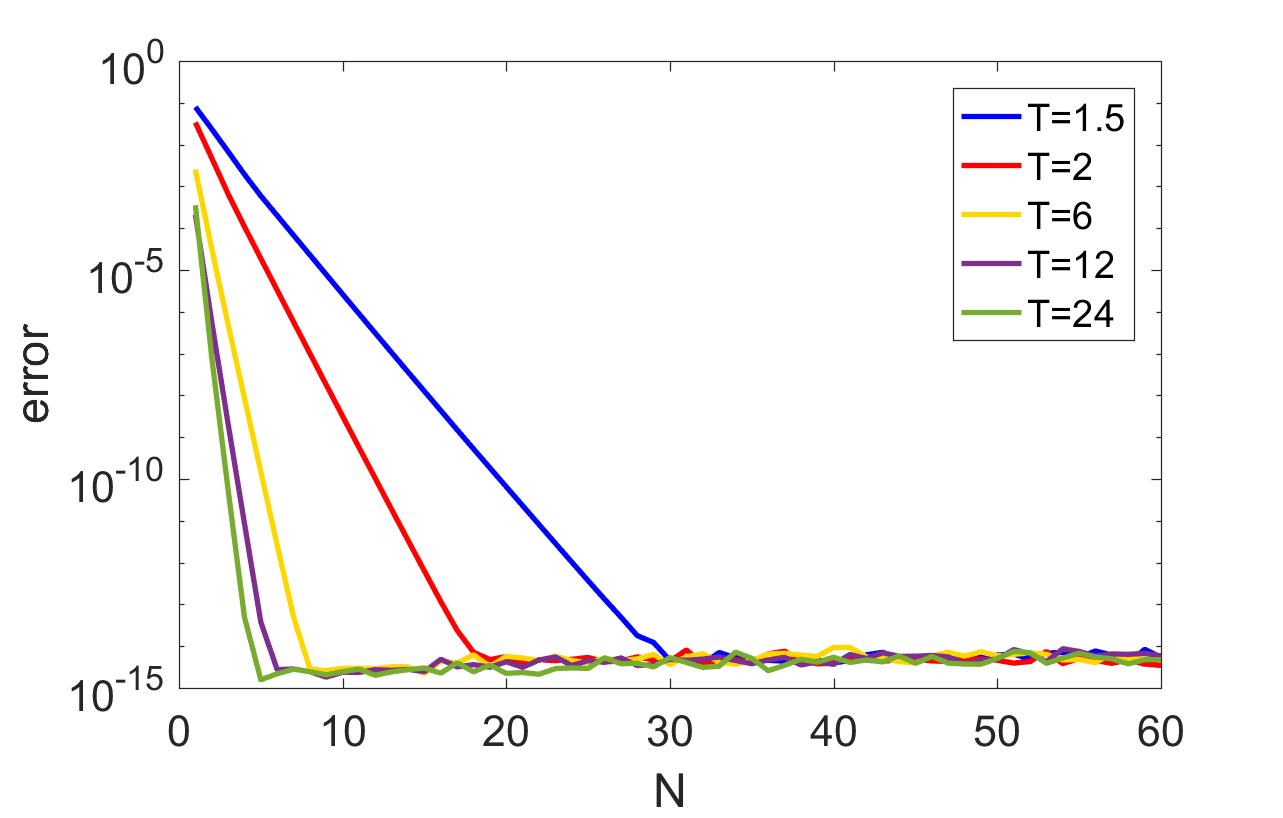}}
}%

 \subfigure[\label{5c} $T=12$, $K=20$, $\omega=e$] {
\resizebox*{5.5cm}{!}{\includegraphics{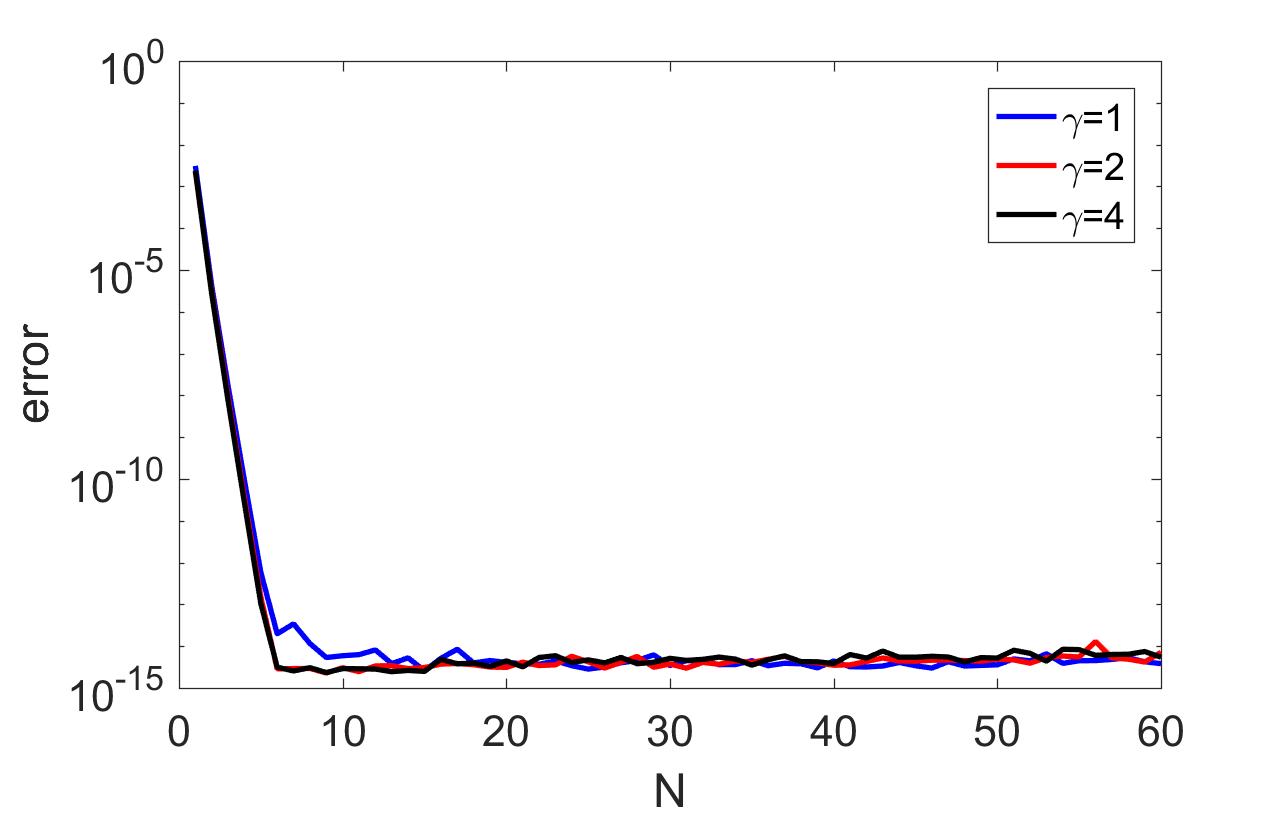}}
}%
 \subfigure[\label{5d}$\gamma=2$, $T=4$, $\omega=\pi$]  {
\resizebox*{5.5cm}{!}{\includegraphics{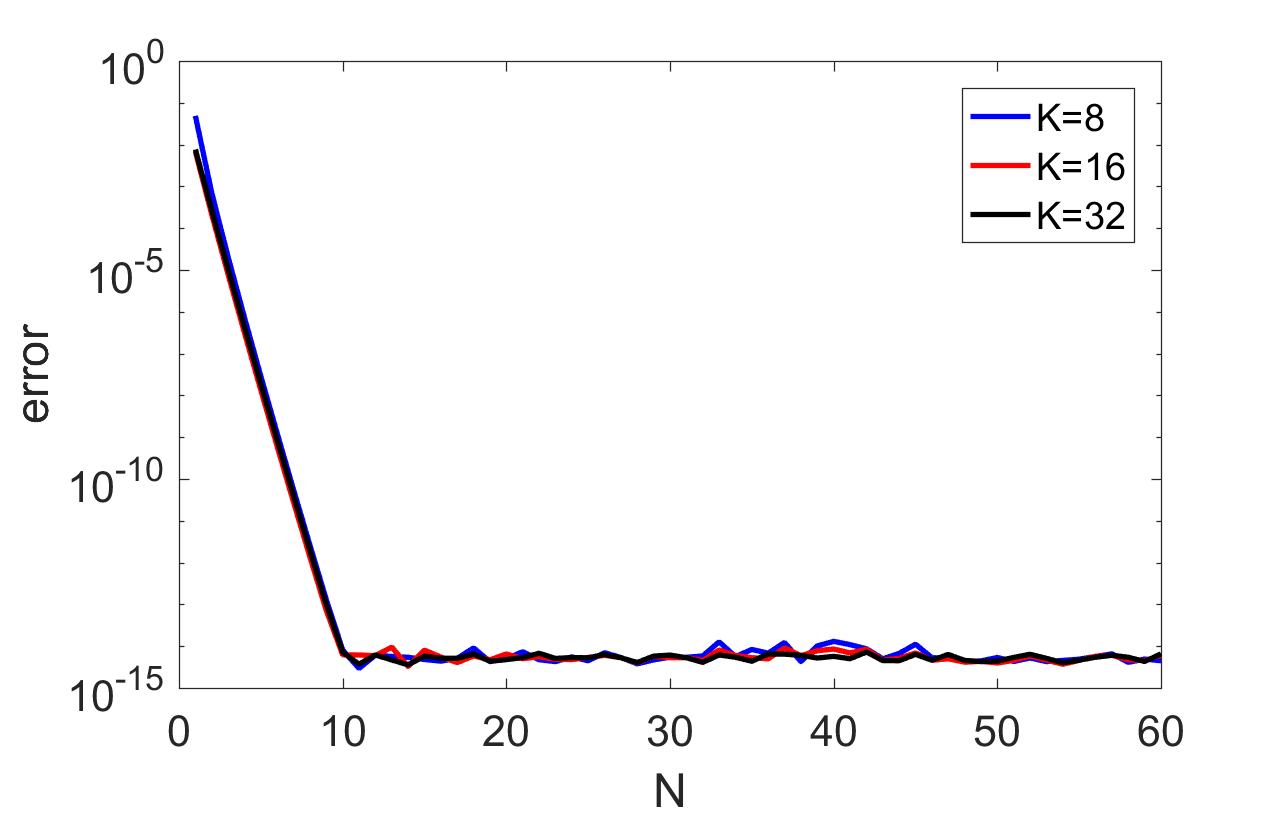}}
}%
		{\caption{The approximation error  against $N$ for different values of $\omega$,  $\gamma$, $T$ and $K$. \label{figNchange}}}
	\end{center}
\end{figure}
\subsection{Computational complexity analysis and parameter determination}
According to the test results in the previous section, the algorithm in this paper can be calculated using a fixed $ N_{\Delta}\geq N_0$. The overall algorithm flow can be described as follows:
1. Given the split point ${a_k}_{k=1}^K$, parameters $T$, $\gamma$, $N_{\Delta}$, and $\epsilon$.
2. Form the discrete matrix ${\bf F}_{\gamma, N_{\Delta}}^T$ and perform SVD decomposition on it, select ${\bm \sigma}_j\geq \epsilon$ and the corresponding ${\bf u}_j, {\bf v}_j$.
3. For $k=1,2,\ldots K,$ calculate $\tilde{\bf c}^{\epsilon}_{N_\Delta,k}$.

Under this setting, the number of nodes in each subinterval is $m_{\Delta}=\gamma (2N_{\Delta}+1)$, $L_{\Delta}=Tm_{\Delta}=T\gamma (2N_{\Delta}+1)$. The number of nodes on the overall interval $[a,b]$ is $M=Km_{\Delta}$. The algorithm's computational complexity is divided into the following parts:
\begin{itemize}
  \item Generation of matrix ${\bf F}_{\gamma, N_{\Delta}}^T$ and its SVD decomposition, the computational complexity is  $$\mathcal{O}\left(N^3_{\Delta}\right)\sim\mathcal{O}(1).$$
  It should be noted that because ${\bf F}_{\gamma, N_{\Delta}}^T$ is fixed in the method, the calculation of this step can be completed in advance and stored.
  \item Calculation of coefficient $ {\bf c}_{N_{\Delta},k}$, the amount of calculation is
$$C_{\Delta}\times m_{\Delta}\times K=C_{\Delta}M$$
where $C_{\Delta}$ is the number of ${\bm \sigma}_j$ greater than $\epsilon$ and we have
$$C_0\leq 2N_{\Delta}+1.$$
So the  computational complexity is
$$\mathcal{O}(M).$$
\item Obtain the extended data of each sub-interval, which can be achieved through IFFT, and the computational complexity is
$$K\times O\left(L_{\Delta}\log(L_{\Delta})\right)\sim  O\left(M\right).$$
\end{itemize}

Combining the previous test results, we actually hope that the product $T\gamma N_{\Delta}$ is minimized, so we recommend
 using the  parameters
\begin{equation}
  \gamma=1, T=6, N_{\Delta}=9
\end{equation}
for actual calculations. In this case
$$m_{\Delta}=\gamma\times (2N_{\Delta}+1)=19, L_{\Delta}=T\times m_{\Delta}=114.$$
\section{Numerical Examples\label{SEC4}}
In this subsection, some numerical experiments are given to verify the proposed method. All the tests were computed on a Windows 10 system with a 16 GB memory,
Intel(R) Core(TM)i7-8500U CPU\@1.80GHz using MATLAB 2016b.

{\bf Example 1} We take the test function as
$$f_1(x)=\frac{1}{1+25x^2},\quad x\in [-1,1].$$
 Fig. \ref{6a} displays the distribution of absolute approximation errors at ten sampling points using partition points  $a_k \in \{-1+\frac{2k}{K}\}$. Key observations reveal: At  $K=4$,  errors in boundary-adjacent regions approach machine precision {with progressive accuracy degradation toward interior regions. This spatial pattern reflects the function's frequency characteristics:
$f_1(x)$  exhibits smoother behavior near boundaries compared to higher-frequency in interior regions. Doubling partition density to  $K=8$  leaves the two central subintervals above machine precision threshold, while $K=12$  enables all subintervals to achieve errors approaching machine precision. This progression demonstrates that strategically positioned partition points informed by a priori knowledge of function oscillation frequency can achieve machine-level accuracy with sparser discretization. Supporting evidence appears in  Fig. \ref{6b}, comparing errors for non-uniform partitions $a_k\in\{0,\pm 0.2,\pm0.5,\pm 1\}$.  While adaptive sampling techniques could be incorporated to further optimize partition placement, such extensions fall beyond our current scope.
\begin{figure}
			\begin{center}
\subfigure[\label{6a}$a_k\in \{-1+\frac{2k}{K}\}$] {
\resizebox*{5.5cm}{!}{\includegraphics{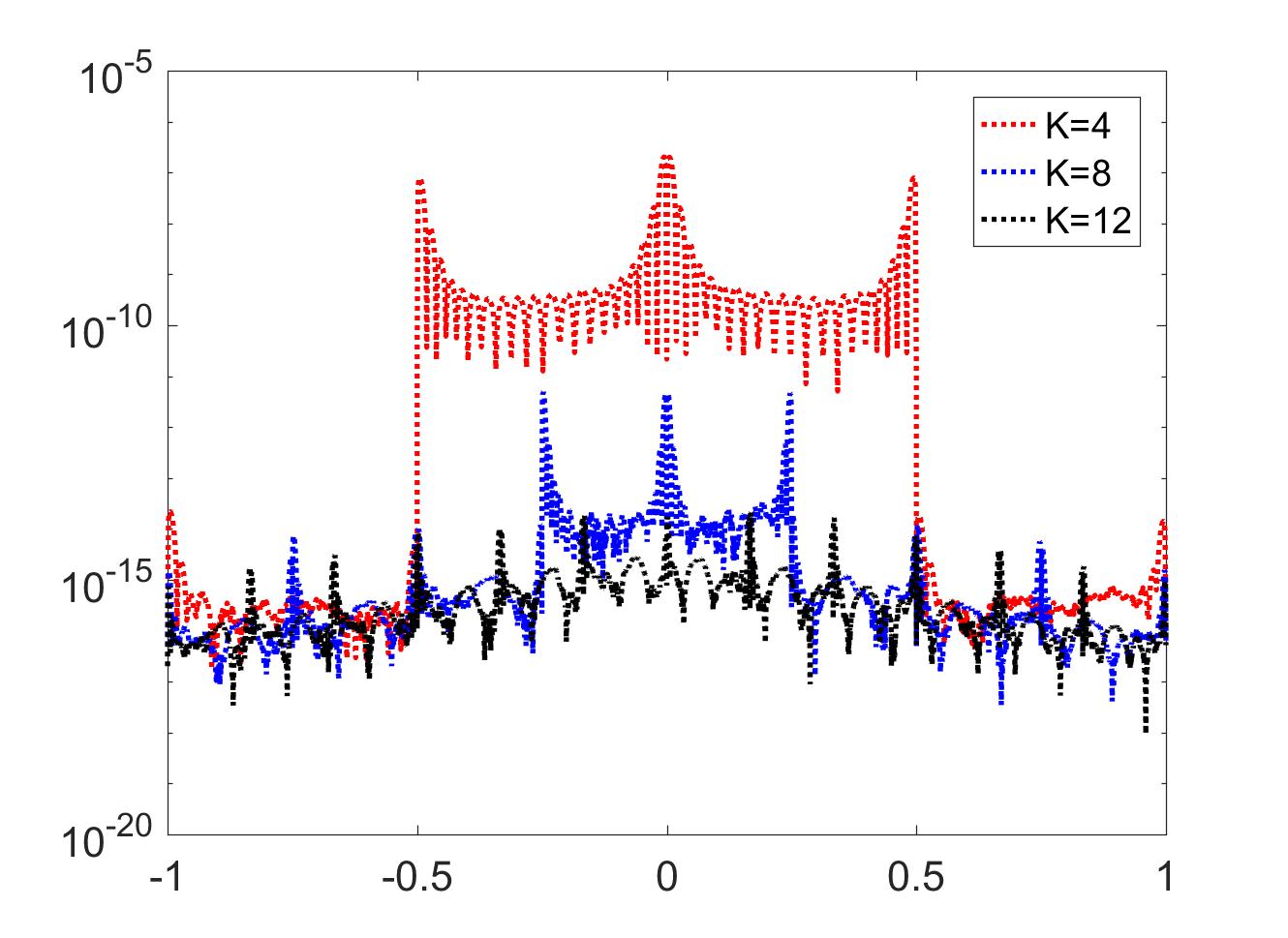}}
}%
\subfigure[\label{6b}$a_k\in\{0,\pm 0.2,\pm0.5,\pm 1\}$] {
\resizebox*{5.5cm}{!}{\includegraphics{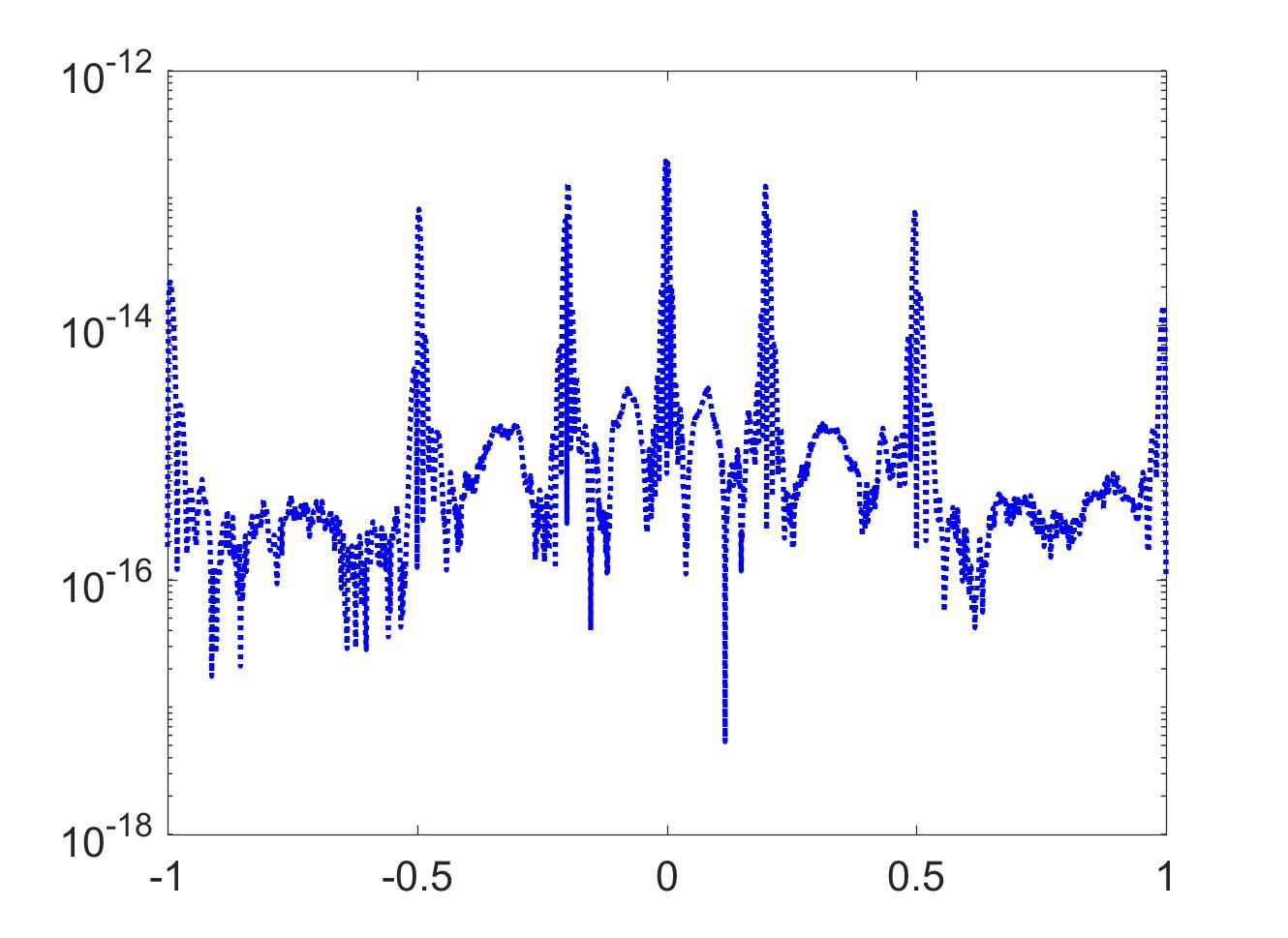}}
}%

		{\caption{The distribution of the absolute value of the approximation error. \label{ex1}}}
	\end{center}
\end{figure}

{\bf Example 2} Next, we take the test functions as
$$f_2(x)=\cos(200x^2),\quad f_3(x)=\text{Ai}(-66-70 x),\quad f_4(x)=e^{\sin(65.5\pi x-27\pi)-\cos(20.6\pi x)}.$$
where $\text{Ai}(\cdot)$ denotes the Airy function. Figs. \ref{7a}--\ref{7c} display the plots of these functions, revealing their pronounced high-frequency oscillatory behavior. Fig. \ref{7d} illustrates the evolution of the approximation error for the three functions as $K$ increases. Notably, as long as $K$ exceeds a value related to the frequency of the function, the algorithm can obtain an approximation close to machine accuracy, demonstrating the method's capability to resolve high-frequency features with near-optimal numerical accuracy.

\begin{figure}
			\begin{center}
\subfigure[\label{7a}$f_2(x)$] {
\resizebox*{5.5cm}{!}{\includegraphics{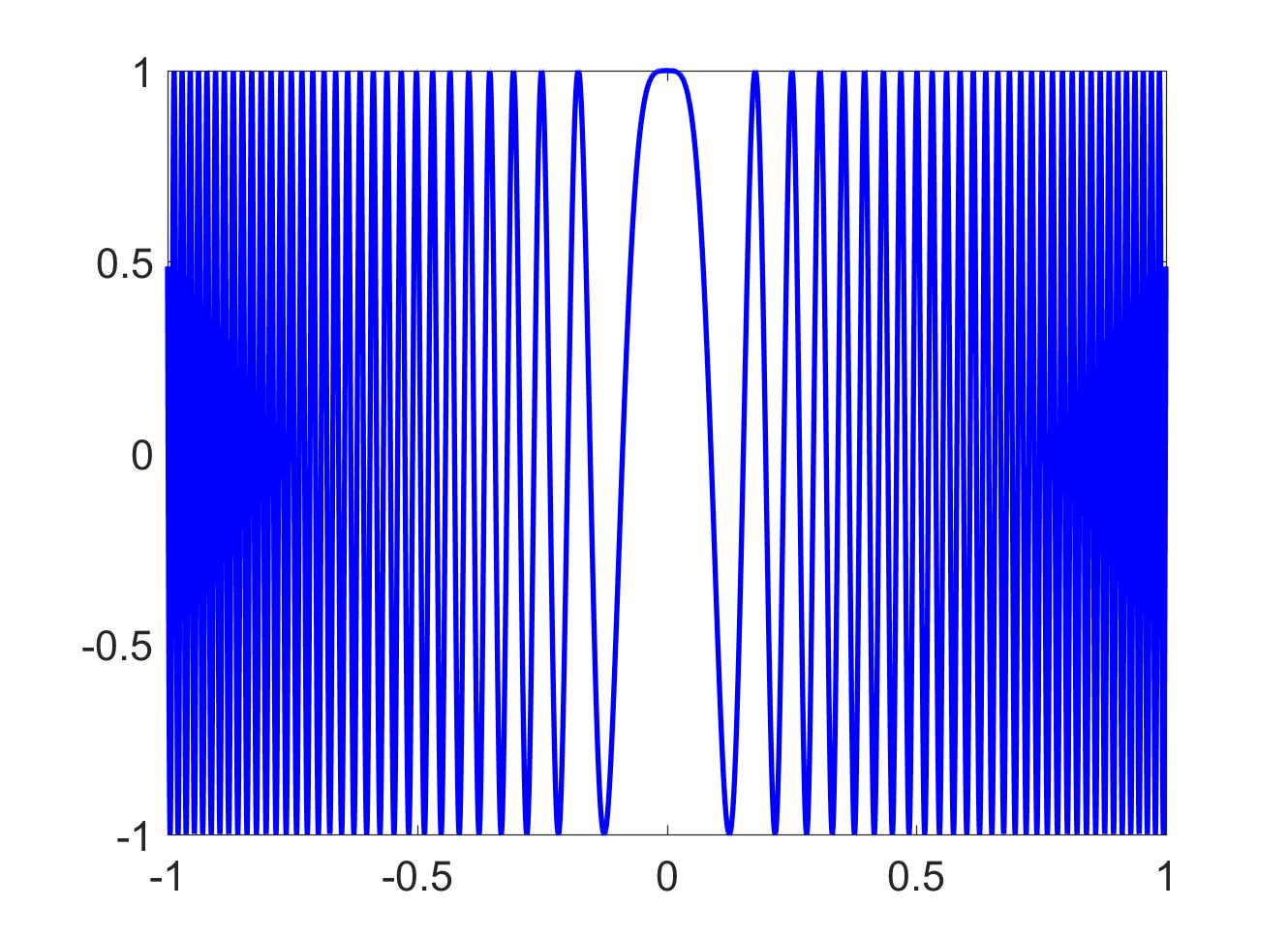}}
}%
\subfigure[\label{7b}$f_3(x)$] {
\resizebox*{5.5cm}{!}{\includegraphics{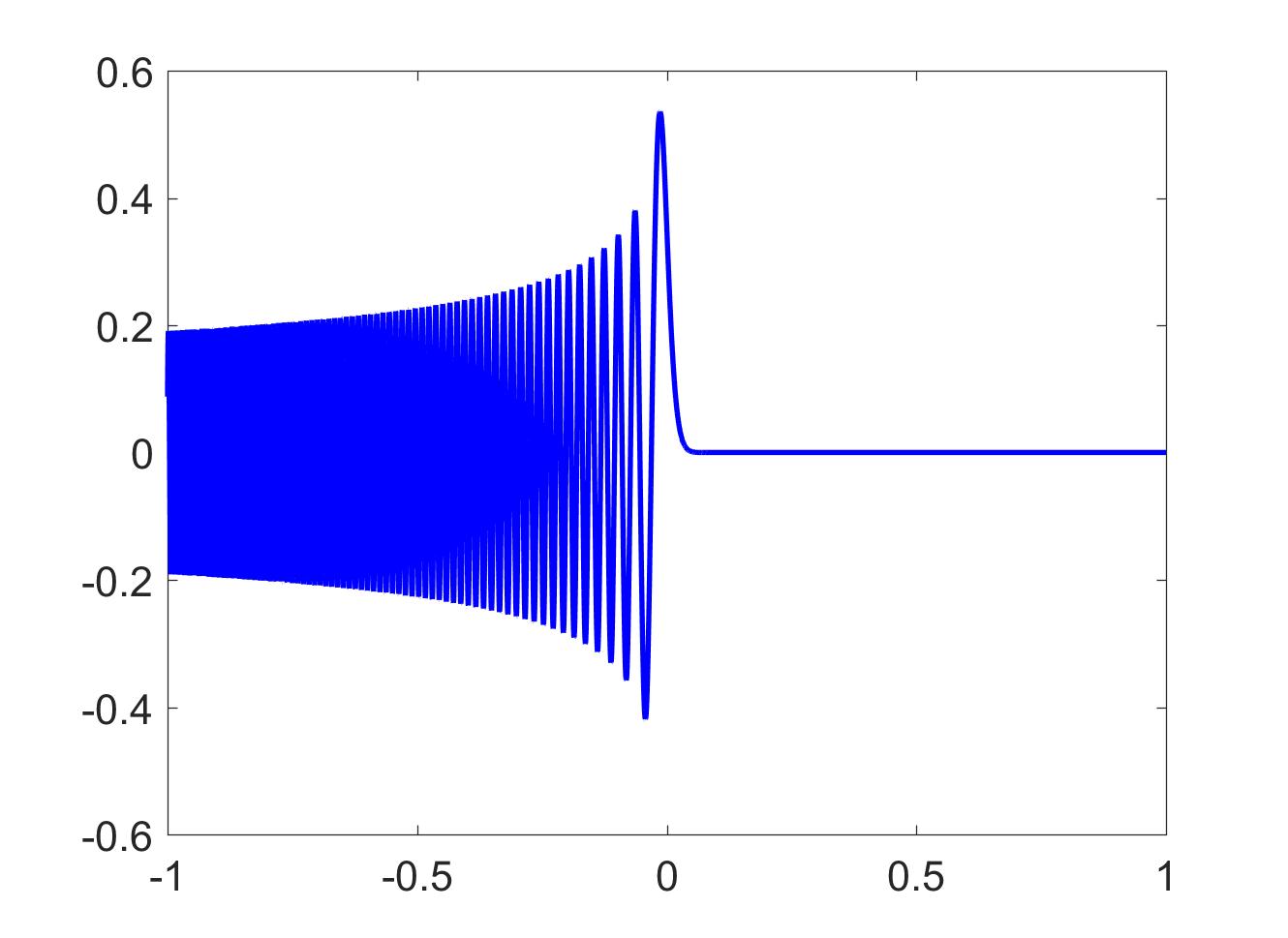}}
}%

\subfigure[\label{7c}$f_{4}(x)$] {
\resizebox*{5.5cm}{!}{\includegraphics{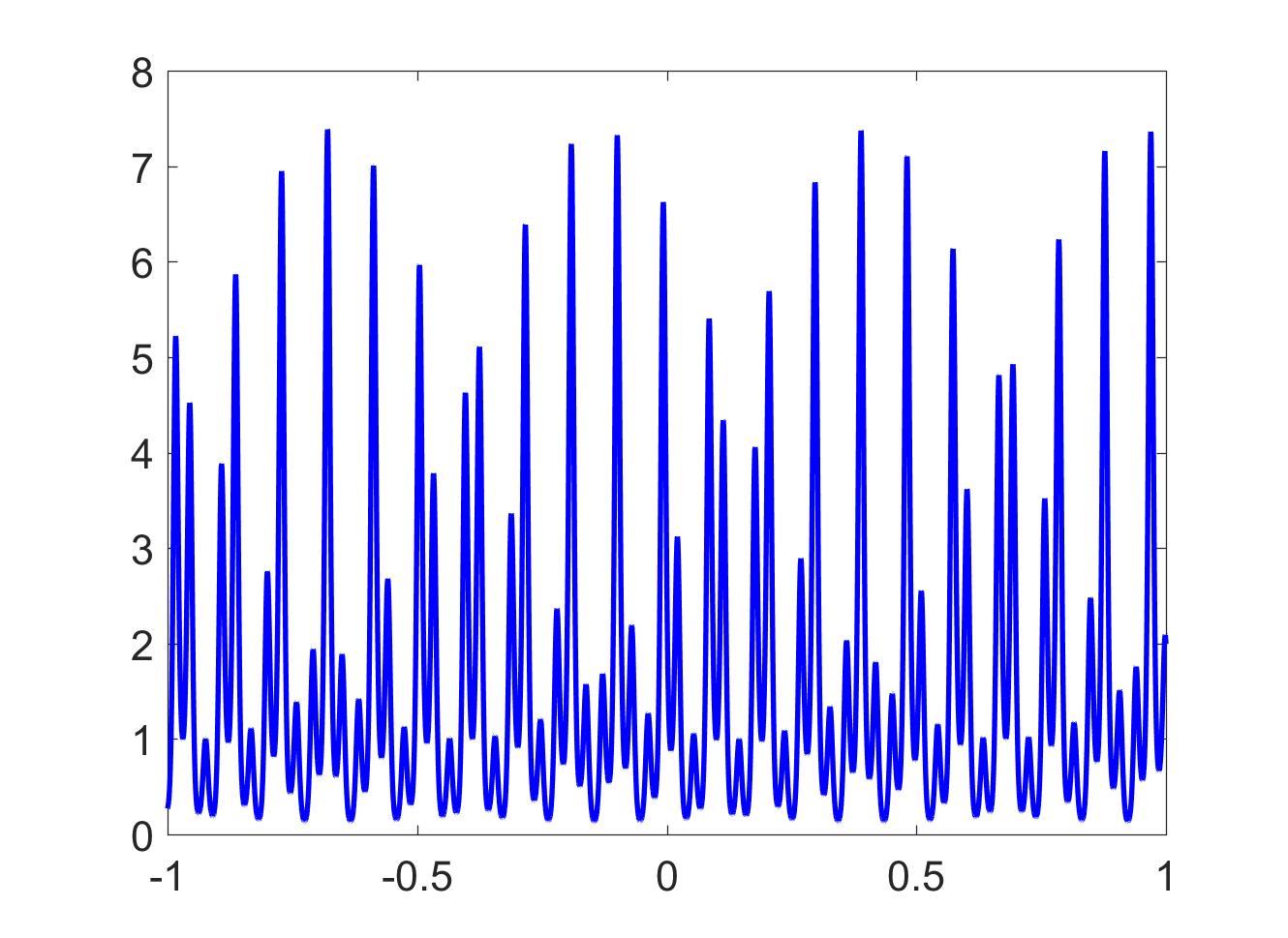}}
}%
\subfigure[\label{7d}approximation error] {
\resizebox*{5.5cm}{!}{\includegraphics{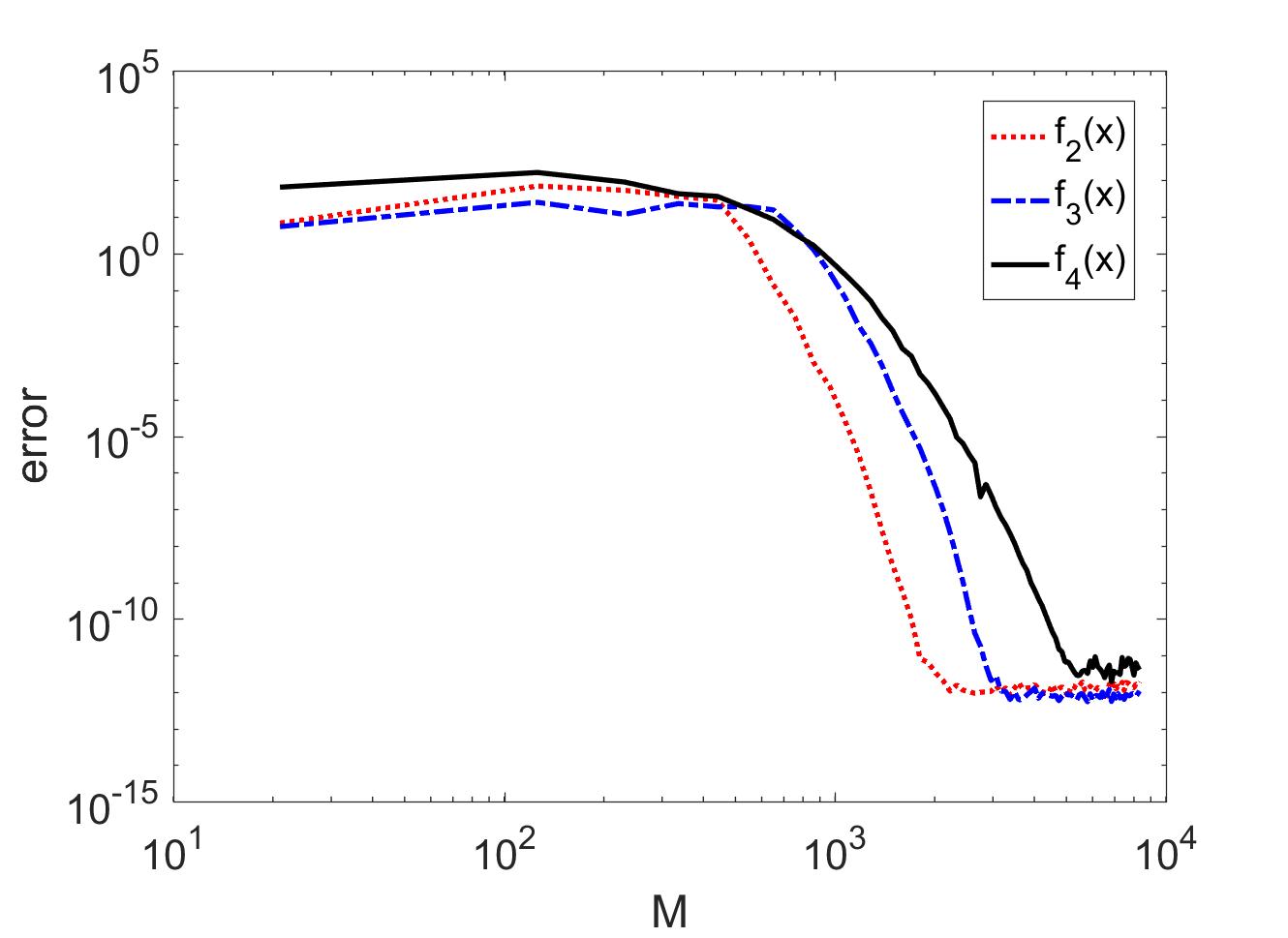}}
}%
		{\caption{The functions $f_2$, $f_3$, $f_4$ and their approximation error against $M$. \label{ex2}}}
	\end{center}
\end{figure}

{\bf Example 3} In numerous computational problems, employing a consistent discretization scale to approximate diverse functions is often essential. To demonstrate this capability, we maintain a fixed discretization level at
$K=20$ and evaluate the following representative test functions:
$$f_5(x)=e^{\sin(2.7\pi x)+\cos(\pi x)},\quad f_6(x)=x^2\sin(10 x),\quad f_7(x)=\frac{1}{8-7x}.$$
 Figs. \ref{8a}-\ref{8c} present the graphical representations of these functions, while  Fig. \ref{8d} compares the approximation errors generated by our method. The results demonstrate that the proposed approach achieves comparable approximation accuracy across these functionally distinct cases under identical discretization parameters. This consistent performance underscores the methodological versatility of our framework in handling diverse functional forms, as evidenced by the uniform error profiles despite varying function characteristics.

\begin{figure}
			\begin{center}
\subfigure[\label{8a}$f_5(x)$] {
\resizebox*{5.5cm}{!}{\includegraphics{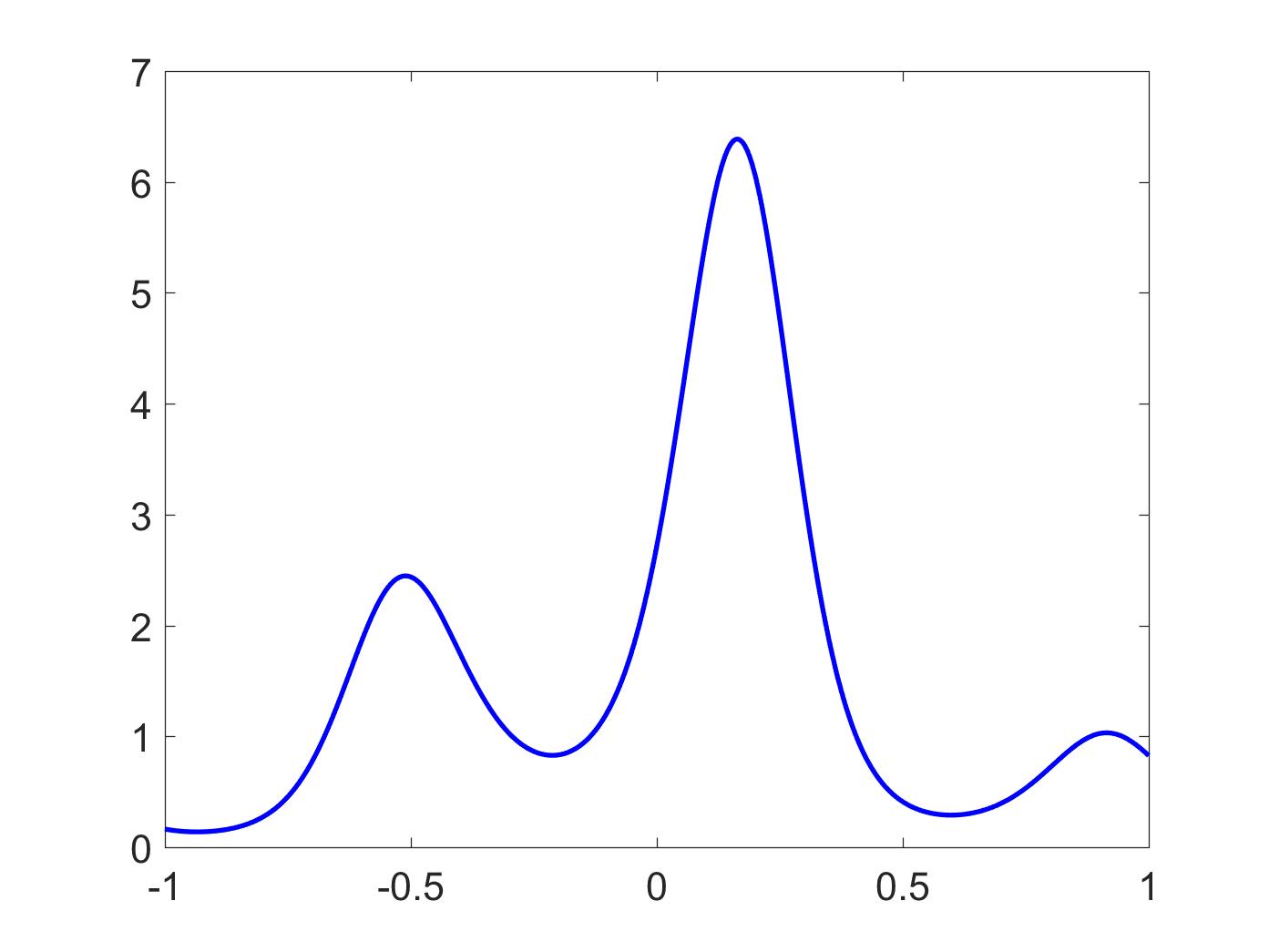}}
}%
\subfigure[\label{8b}$f_6(x)$] {
\resizebox*{5.5cm}{!}{\includegraphics{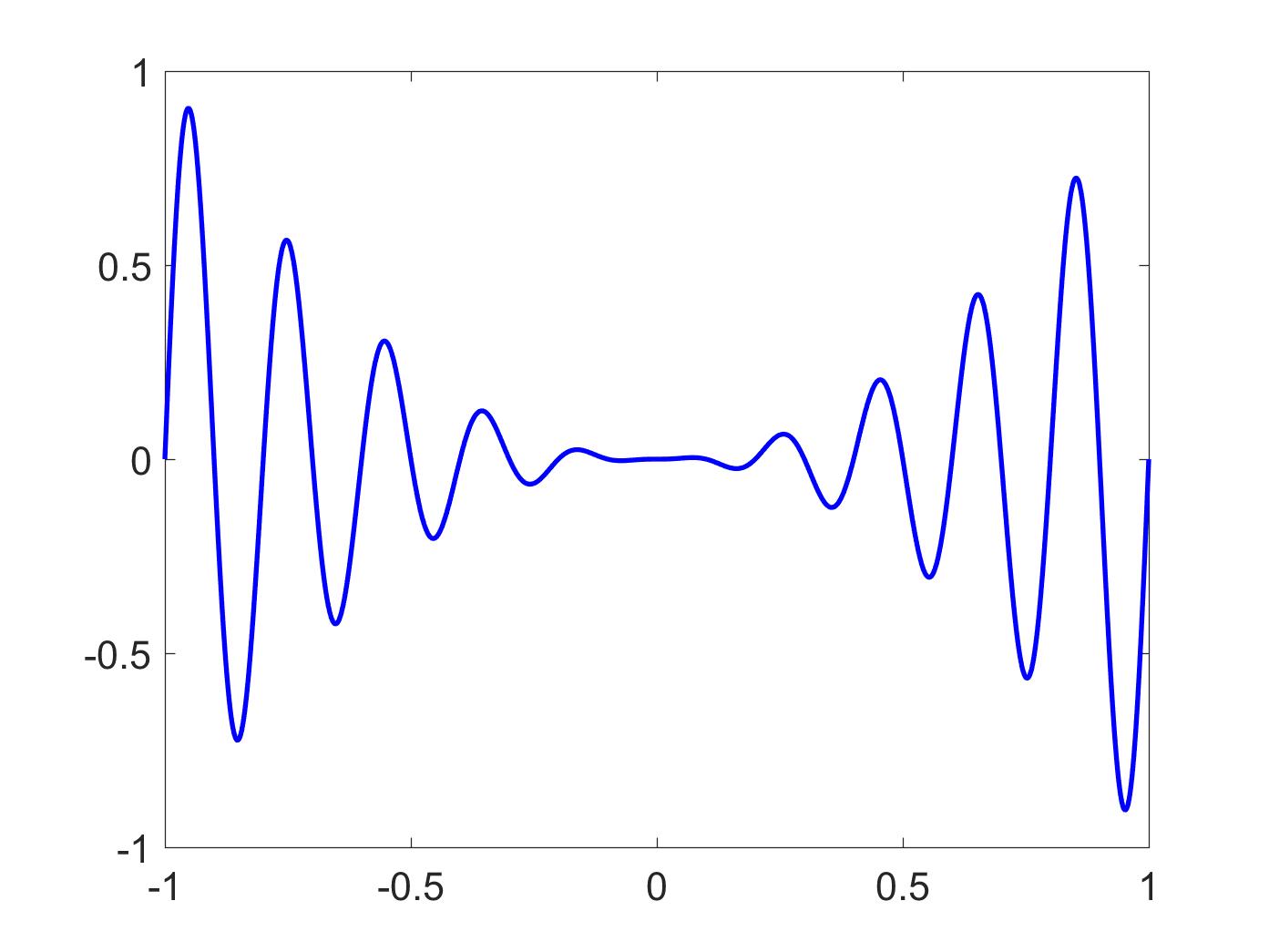}}
}%

\subfigure[\label{8c}$f_{7}(x)$] {
\resizebox*{5.5cm}{!}{\includegraphics{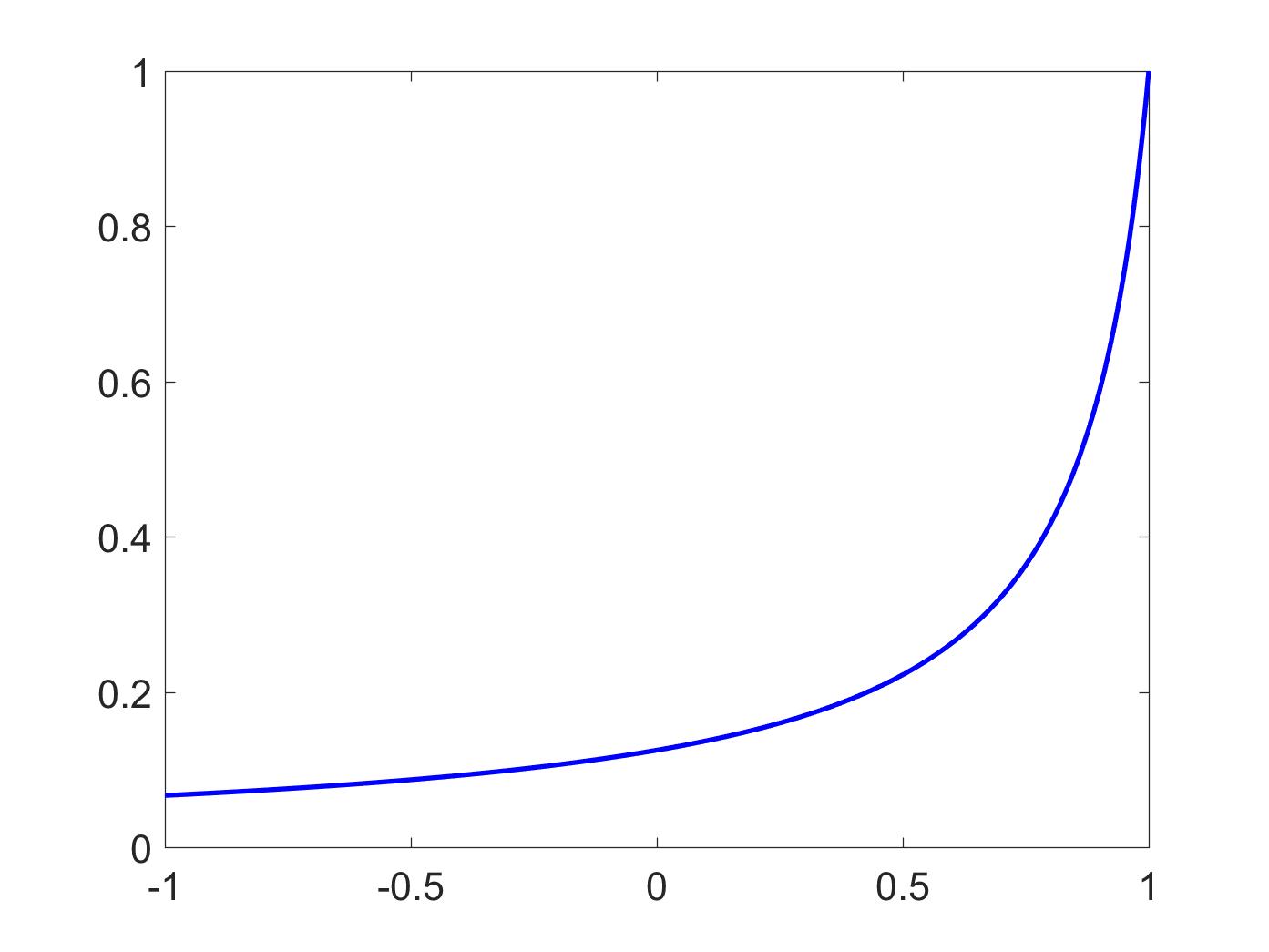}}
}%
\subfigure[\label{8d}approximation error] {
\resizebox*{5.5cm}{!}{\includegraphics{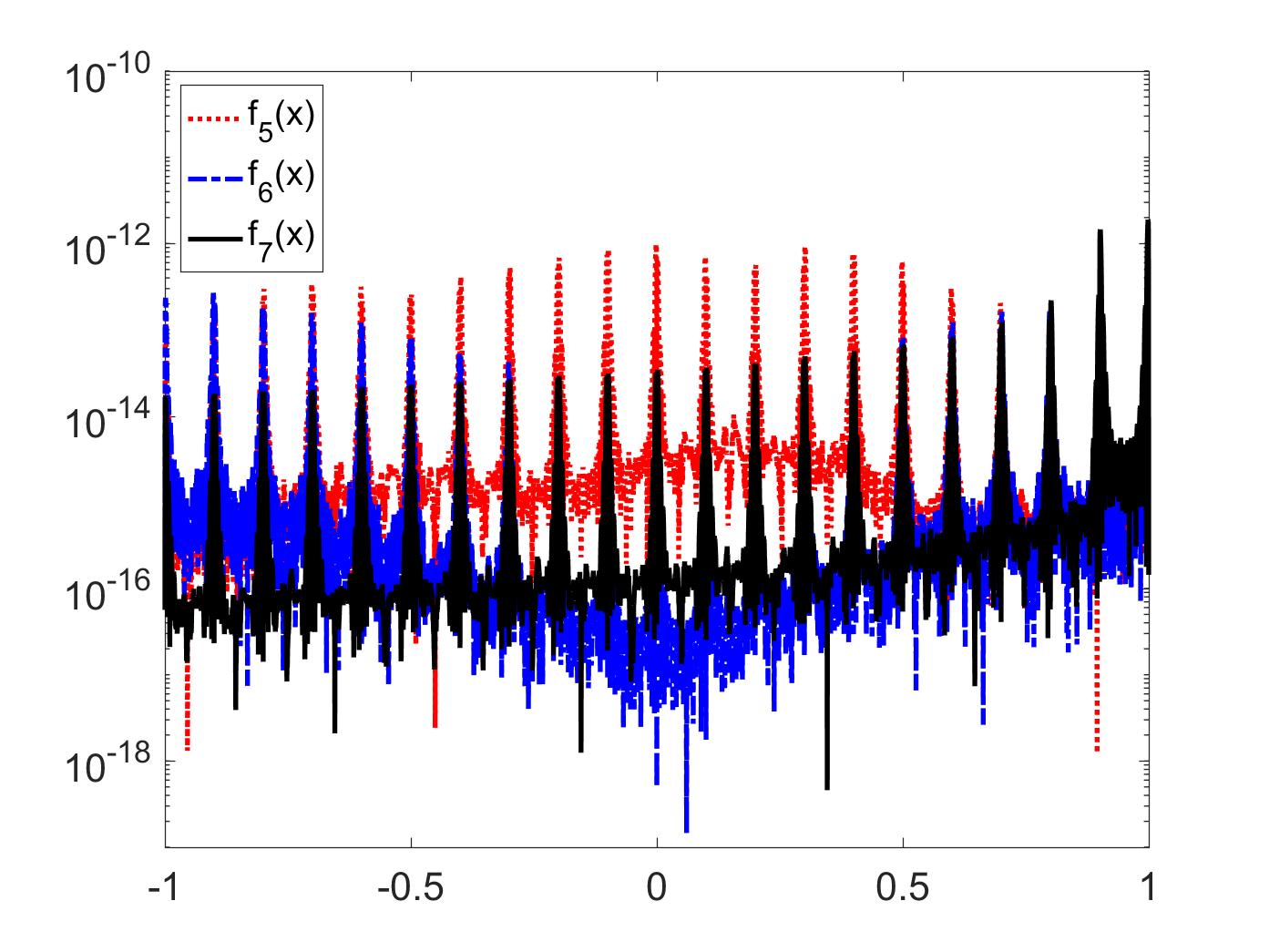}}
}%
		{\caption{The functions $f_5$, $f_6$, $f_7$ and their approximation error distribution. \label{ex3}}}
	\end{center}
\end{figure}

{\bf Example 4}.
In this example, we investigate a test function with localized singular behaviour, defined as
\begin{equation*}
  f_8(x)=
  \begin{cases}
    1, & -1\leq x\leq -\frac{1}{2},\\[2mm]
    \sin(\pi x), & -\frac{1}{2}<x\leq 0,\\[2mm]
    x^2, & 0<x\leq 1.
  \end{cases}
\end{equation*}
It is clear that $f_8$ is globally continuous; however, its first derivative is discontinuous at $d_1=0$, and the second derivative is discontinuous at $d_2=-\frac12$.

\begin{figure}[htbp]
  \centering
\subfigure[\label{9a} $f_8(x)$]{
\includegraphics[width=5.5cm]{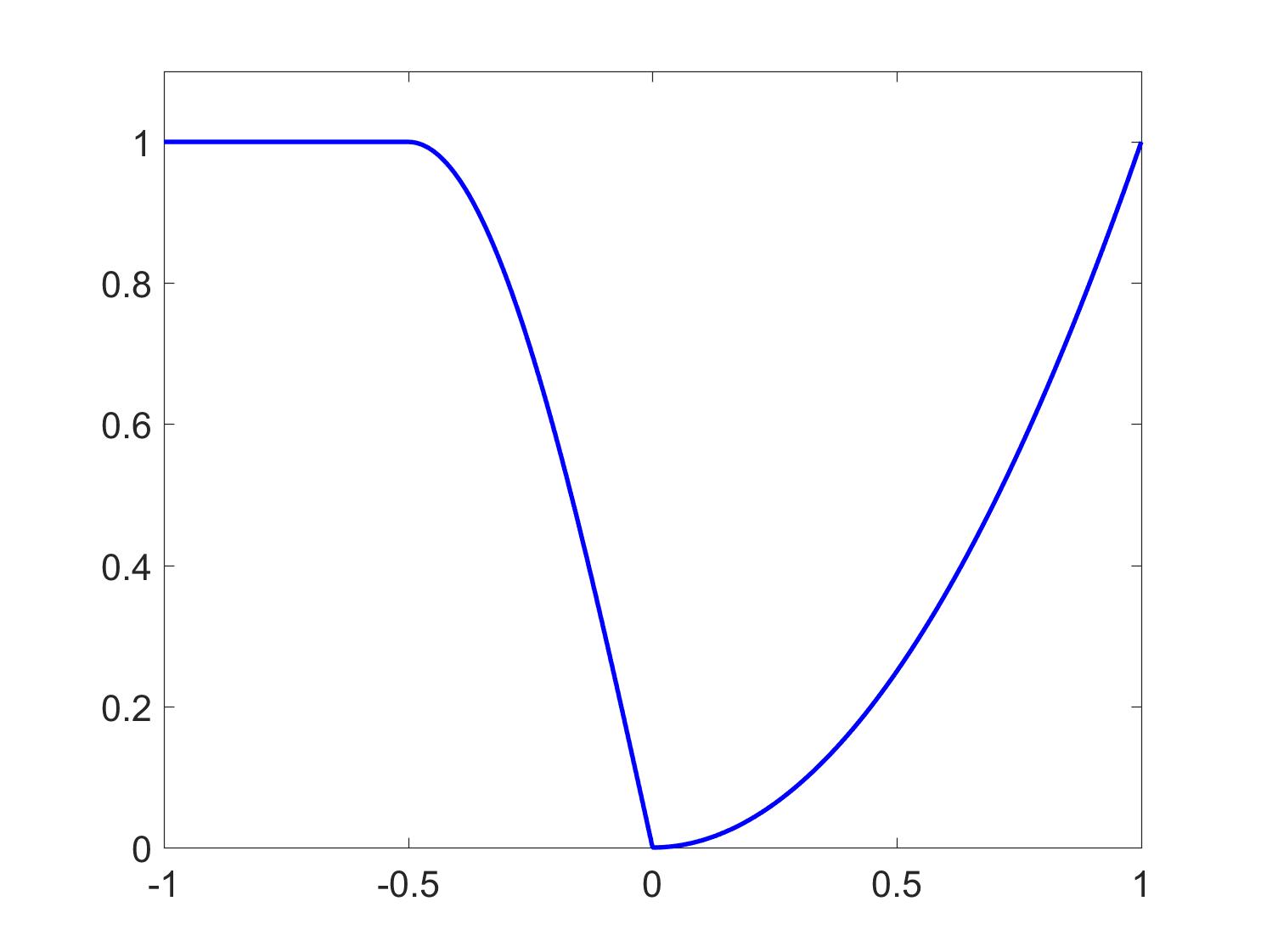}}
\subfigure[\label{9b} Approximation error for various $K$]{
\includegraphics[width=5.5cm]{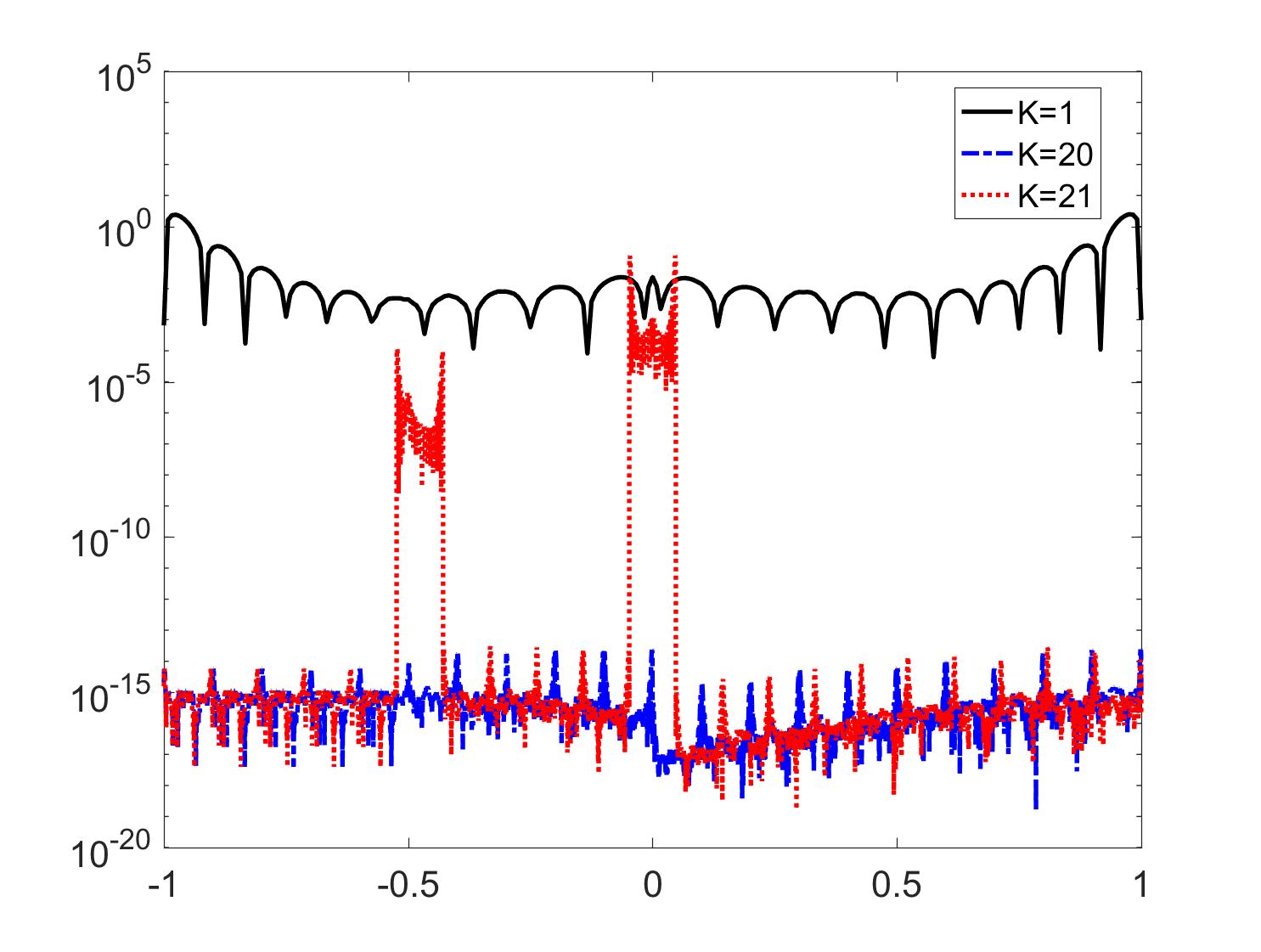}}
\subfigure[\label{9c} Identified singular intervals and singular points]{
\includegraphics[width=5.5cm]{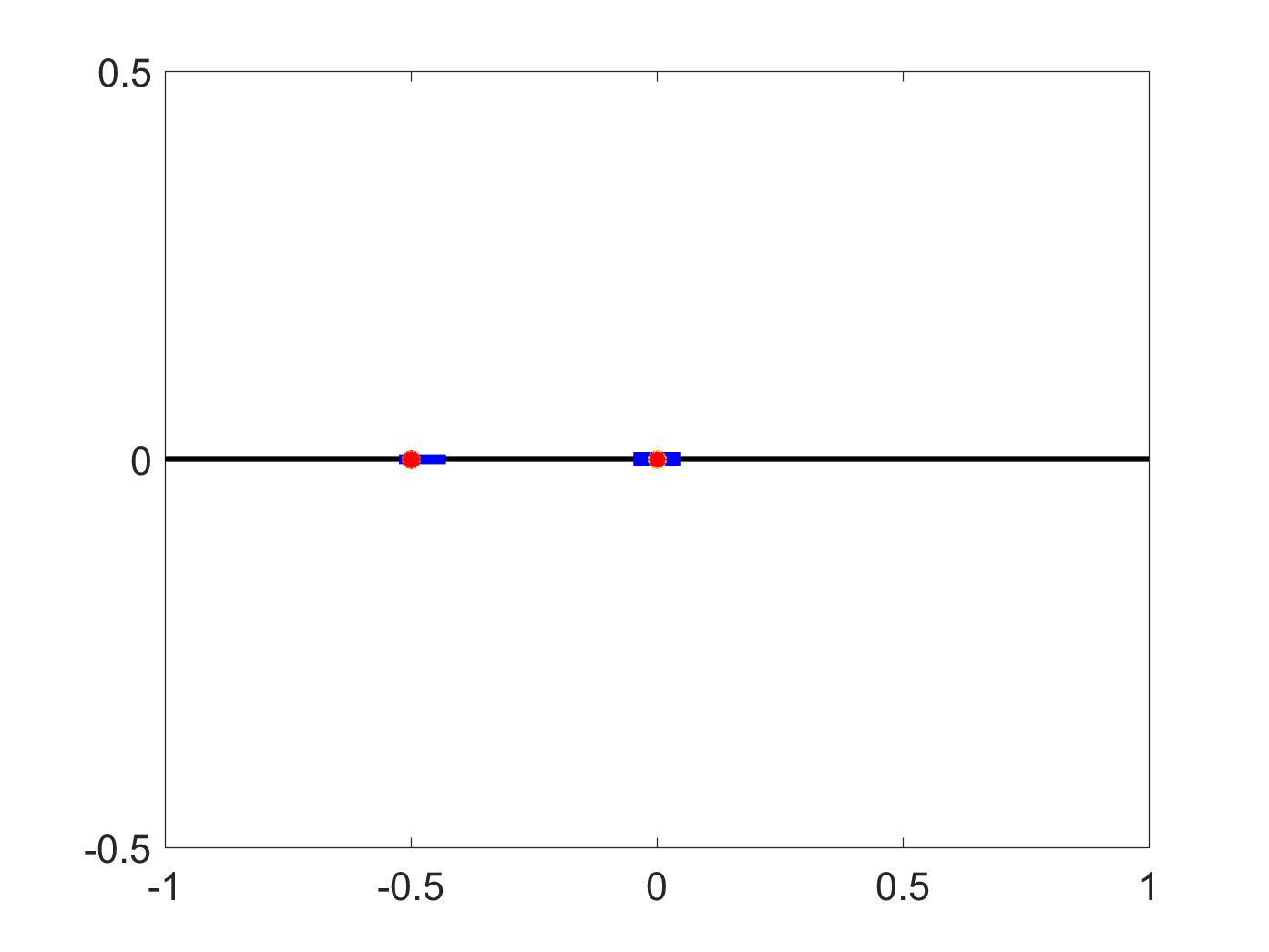}}
\subfigure[\label{9d} Approximation error after singular point correction ($K=21$)]{
\includegraphics[width=5.5cm]{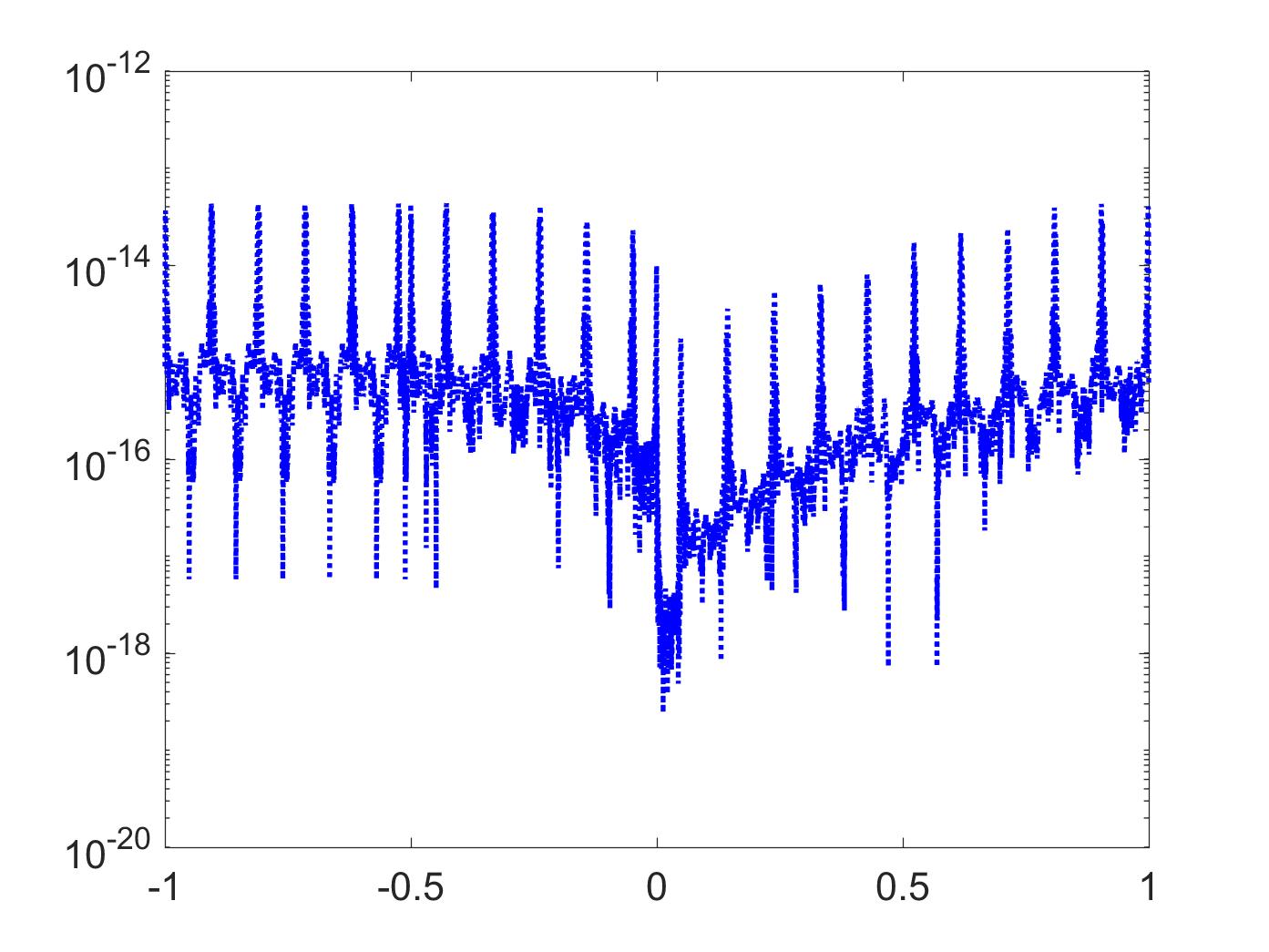}}
\caption{Numerical results for Example 4.}
\label{ex4}
\end{figure}

Figure~\ref{9b} reports the approximation error for $K=1$, $20$, and $21$. When $K=1$, the presence of singular points seriously deteriorates the accuracy over the entire interval. When $K=20$, both $d_1$ and $d_2$ lie in the partition points $\{a_k\}$, so each subinterval $I_j$ is free of singularities and spectral accuracy is achieved throughout. However, for $K=21$, $d_1$ and $d_2$ fall inside subintervals $I_{11}$ and $I_6$, respectively, resulting in a severe degradation of accuracy on those subintervals.

This observation raises a natural question: for cases like $K=21$, can we automatically localize the subintervals and positions of the singularities and then correct the approximation accordingly? Note that in general $g_k\notin R(\mathcal{F}_N)$, and thus the equation $\mathcal{F}_N{\bf c}_{N,k}=g_k$ is perturbed. If the $k_0$-th subinterval contains a singularity, $g_{k_0}$ suffers a large perturbation, and taking the regularization parameter $\epsilon$ too small yields a large coefficient norm $\|{\bf c}_{N,k_0}\|_2$. Table~\ref{tabex4} lists $\|{\bf c}_{N,k}\|_2$ for $K=21$. It is observed that $\|{\bf c}_{N,6}\|_2$ and $\|{\bf c}_{N,11}\|_2$ are markedly larger than the others, which serves as a reliable indicator for locating singularity-containing subintervals.

Once the suspected subinterval $I_{k_0}$ is located, we further subdivide it by defining
\begin{equation*}
IL_{k_0,i}=[x_{k_0-1,i},x_{k_0,i}],\qquad IR_{k_0,i}=[x_{k_0,i},x_{k_0+1,i}], \qquad i=1,\ldots,m-2.
\end{equation*}
Since $IL_{k_0,i}$ and $IR_{k_0,i}$ inherit the same equispaced nodes as the original grid, we may solve \eqref{discreteeq} again to obtain coefficients ${\bf cL}_{N,i}$ and ${\bf cR}_{N,i}$. We then select the index
\begin{equation*}
  i_0=\mathop{\arg\min}_{i}\bigl\{\|{\bf cL}_{N,i}\|_2+\|{\bf cR}_{N,i}\|_2\bigr\},
\end{equation*}
and take $x_{k_0,i_0}$ as the detected breakpoint. The function is then approximated using ${\bf cL}_{N,i_0}$ on $[x_{k_0,0},x_{k_0,i_0}]$ and using ${\bf cR}_{N,i_0}$ on $[x_{k_0,i_0},x_{k_0,m-1}]$. Figure~\ref{9c} displays the localization results, while Figure~\ref{9d} shows that, after singularity correction, spectral accuracy is effectively restored.

\begin{table}[htbp]
\centering
\caption{The values of $\|{\bf c}_{N,k}\|_2$ for $K=21$.}
\label{tabex4}
\small
\begin{tabular*}{\textwidth}{@{\extracolsep\fill}cccccccc}
\toprule
$k$ & 1 & 2 & 3 & 4 & 5 & 6 & 7 \\
\midrule
$\|{\bf c}_{N,k}\|_2$ & 9.74e1 & 9.74e1 & 9.74e1 & 9.74e1 & 9.74e1 & {\color{blue}3.24e8} & 8.63e1 \\
\bottomrule
\\[-2mm]
\toprule
$k$ & 8 & 9 & 10 & 11 & 12 & 13 & 14 \\
\midrule
$\|{\bf c}_{N,k}\|_2$ & 7.39e1 & 5.60e1 & 3.65e1 & {\color{blue}6.56e11} & 2.73e{-}1 & 5.72e{-}1 & 1.03e1 \\
\bottomrule
\\[-2mm]
\toprule
$k$ & 15 & 16 & 17 & 18 & 19 & 20 & 21 \\
\midrule
$\|{\bf c}_{N,k}\|_2$ & 1.66e1 & 2.45e1 & 3.43e1 & 4.58e1 & 5.90e1 & 7.40e1 & 9.10e1 \\
\bottomrule
\end{tabular*}
\end{table}

\section{Conclusions and remarks\label{SEC5}}
We conclude the paper with some final  remarks.
\begin{itemize}
  \item Methodological Framework: We present a novel localized Fourier extension scheme that achieves piecewise low-order approximations through domain segmentation. The framework establishes an analytical relationship between critical parameters $T$ and $\gamma$, accompanied by an optimized parameter selection strategy. This formulation ultimately yields a spectral approximation algorithm demonstrating $\mathcal{O}(M)$
computational complexity while maintaining exponential convergence rates.
      \item  Generalization Potential: The proposed localization methodology extends naturally to polynomial frame approximation and other frame systems. Future investigations will systematically compare their relative merits and limitations against our current framework, particularly regarding approximation efficiency, stability thresholds, and computational overhead.
  \item Methodological Transfer: The developed framework demonstrates inherent adaptability to broader problem classes through appropriate operator transformations. Establishing generalized implementation protocols for diverse computational scenarios will constitute a primary focus of forthcoming research, particularly in partial differential equation solutions and high dimensional approximation tasks. This approach demonstrates promising potential for gravity and magnetic data extrapolation\cite{WangIP2011}, with comprehensive analyses to be addressed in forthcoming research.
\end{itemize}


\begin{thebibliography}{10}

\bibitem{adcock2019frames}
B.~Adcock and D.~Huybrechs.
\newblock Frames and numerical approximation.
\newblock {\em SIAM Review}, 61(3):443--473, 2019.

\bibitem{adcock2020frames}
B.~Adcock and D.~Huybrechs.
\newblock Frames and numerical approximation {II}: generalized sampling.
\newblock {\em Journal of Fourier Analysis and Applications}, 26(6):87, 2020.

\bibitem{adcock2014numerical}
B.~Adcock, D.~Huybrechs, and J.~Mart{\'\i}n-Vaquero.
\newblock On the numerical stability of {Fourier} extensions.
\newblock {\em Foundations of Computational Mathematics}, 14:635--687, 2014.

\bibitem{bruno2010high}
O.~P Bruno and M.~Lyon.
\newblock High-order unconditionally stable {FC-AD} solvers for general smooth
  domains {I}. basic elements.
\newblock {\em Journal of Computational Physics}, 229(6):2009--2033, 2010.

\bibitem{Bruno2022}
O.~P. Bruno and J.~Paul.
\newblock Two-dimensional {Fourier} continuation and applications.
\newblock {\em SIAM Journal on Scientific Computing}, 44(2):A964--A992, 2022.

\bibitem{cohen2011wavelets}
J.~Cohen and A.~I. Zayed.
\newblock {\em Wavelets and Multiscale Analysis: Theory and Applications}.
\newblock 2011.

\bibitem{herremans2024efficient}
A.~Herremans and D.~Huybrechs.
\newblock Efficient function approximation in enriched approximation spaces.
\newblock {\em IMA Journal of Numerical Analysis}, page drae017, 2024.

\bibitem{Huybrechs2010}
D.~Huybrechs.
\newblock On the {Fourier} extension of nonperiodic functions.
\newblock {\em SIAM Journal on Numerical Analysis}, 47:4326--4355, 2010.

\bibitem{Lyon2011}
M.~Lyon.
\newblock A fast algorithm for {Fourier} continuation.
\newblock {\em SIAM Journal on Scientific Computing}, 33(6):3241--3260, 2011.

\bibitem{Matthysen2016FAST}
R.~Matthysen and D.~Huybrechs.
\newblock Fast algorithms for the computation of {Fourier} extensions of
  arbitrary length.
\newblock {\em SIAM Journal on Scientific Computing}, 38(2):A899--A922, 2016.

\bibitem{plonka2018numerical}
G.~Plonka, D.~Potts, G.~Steidl, and M.~Tasche.
\newblock {\em Numerical {Fourier} Analysis}.
\newblock 2018.

\bibitem{shen2011spectral}
J.~Shen, T.~Tang, and L.~L. Wang.
\newblock {\em Spectral Methods: Algorithms, Analysis and Applications},
  volume~41.
\newblock Springer, 2011.

\bibitem{WangIP2011}
Y.~F. Wang, I.~E. Stepanova, V.~N. Titarenko, and A.~G. Yagola.
\newblock {\em Inverse Problems in Geophysics and Solution Methods}.
\newblock Higher Education Press, Beijing, 2011.

\bibitem{Webb2020}
M.~Webb, V.~Copp\'{e}, and D.~Huybrechs.
\newblock Pointwise and uniform convergence of {Fourier} extensions.
\newblock {\em Constructive Approximation}, 52:139--175, 2020.

\bibitem{zhao2024fast}
Z.~Y. Zhao, Y.~F. Wang, and A.~G. Yagola.
\newblock Fast algorithms for {Fourier} extension based on boundary interval
  data.
\newblock {\em Numerical Algorithms}, 2025.

\bibitem{zhao2025new}
Z.~Y. Zhao, Y.~F. Wang, A.~G. Yagola, and X.~S. Li.
\newblock A new approach for {Fourier} extension based on weighted generalized
  inverse.
\newblock {\em arXiv preprint arXiv:2501.16096}, 2025.

\end{thebibliography}
\end{document}